\documentclass{amsart}
\usepackage{amssymb,latexsym}
\theoremstyle{plain}
\newtheorem{theorem}{Theorem}

\newtheorem{proposition}{Proposition}
\newtheorem{lemma}{Lemma}
\theoremstyle{definition}
\newtheorem{definition}{Definition}
\newtheorem{example}{Example}
\newtheorem{notation}{Notation}
\newtheorem{remark}{Remark}
\newtheorem{notationr}{Notation and Remark}

\date{}

\begin{document}

\title[Curvilinear subschemes of Veronese varieties]
{A partial stratification of secant varieties of Veronese varieties via curvilinear subschemes}
\author{Edoardo Ballico, Alessandra Bernardi}
\address{Dept. of Mathematics\\
  University of Trento\\
38123 Povo (TN), Italy}
\address{GALAAD, INRIA M\'editerran\'ee,  BP 93, 06902 Sophia 
Antipolis, France.}
\email{ballico@science.unitn.it, alessandra.bernardi@inria.fr}
\thanks{The authors were partially supported by CIRM of FBK Trento 
(Italy), Project Galaad of INRIA Sophia Antipolis M\'editerran\'ee 
(France), Institut Mittag-Leffler (Sweden), Marie Curie: Promoting science (FP7-PEOPLE-2009-IEF), MIUR and GNSAGA of 
INdAM (Italy).}
\subjclass{14N05; 15A69}
\keywords{symmetric tensor rank; symmetric border rank; secant variety; join; Veronese variety, curvilinear schemes, CANDECOMP/PARAFAC}

\begin{abstract}
We give a partial ``~quasi-stratification~'' of the secant varieties of the order $d$ Veronese variety $X_{m,d}$ of $\mathbb {P}^m$. It
covers the set $\sigma _t(X_{m,d})^{\dagger}$ of all points lying on the linear span of curvilinear subschemes of $X_{m,d}$, but
two ``~quasi-strata~'' may overlap. For low border rank two different ``~quasi-strata~'' are disjoint and we compute the
symmetric rank of their elements. Our tool is the Hilbert schemes of curvilinear subschemes of Veronese varieties. To get a stratification
we attach to each $P\in \sigma _t(X_{m,d})^{\dagger}$ the minimal label of a quasi-stratum containing it.
\end{abstract}

\maketitle

\section*{Introduction}\label{S1}
Let $\nu _d: \mathbb {P}^m \hookrightarrow \mathbb {P}^{\binom{m+d}{m}-1}$ be the order $d$ Veronese embedding with $d\ge 3$. We write $X_{m,d}:= \nu _d(\mathbb {P}^m)$.
An element of $X_{m,d}$ can be described both as the projective class of a $d$-th power of a homogeneous linear form in $m+1$ variables and as the projective class of a completely decomposable symmetric $d$-modes tensor.  
In many applications like Chemometrics (see e.g. \cite{tb}), Signal Processing (see e.g. \cite{lc}), Data Analysis (see e.g. \cite{ay}), Neuroimaging (see e.g. \cite{dcdsrvgv}), Biology (see e.g. \cite{mwlwcglv}) and many others, the knowledge of the minimal decomposition of a  tensor in terms of completely decomposable tensors turns out to be extremely useful. This kind of decomposition is strictly related with the concept of secant varieties of  varieties parameterizing tensors (if the tensor is symmetric one has to deal with secant varieties of Veronese varieties).

Let $Y \subseteq \mathbb{P}^N$ be an integral and non-degenerate variety defined over an algebraically closed field $\mathbb {K}$ of characteristic zero.\\
For any point $P\in \mathbb{P}^N$ the \emph{$Y$-rank $r_Y(P)$ of $P$} is the minimal cardinality of a finite set of points $S\subset Y$ such that $P\in \langle S\rangle$, where
$\langle \ \ \rangle$ denote the linear span:
\begin{equation}\label{rank}
 r_Y(P):=\min \{s\in \mathbb{N} \; | \; \exists \,  S\subset Y, \, \sharp(S)=s,  \hbox{ with } \, P\in \langle S \rangle \}.
\end{equation}

If $Y$ is the Veronese variety $X_{m,d}$
the $Y$-rank is also called  the ``~symmetric tensor rank~''.  The minimal set of points $S\subset X_{m,d}$ that realizes the symmetric tensor rank of  a point $P\in X_{m,d}$ is also said the set that realizes either the ``~CANDECOMP/PARAFAC decomposition~''  or  the ``~canonical decomposition~'' of $P$. 

Set $X:= X_{m,d}$. The natural geometric object that one has to study in order to compute the symmetric tensor rank either of a symmetric tensor or of a homogeneous polynomial is the set that parameterizes points in $\mathbb{P}^N$ having $X$-rank smaller or equal than a fixed value $t\in \mathbb{N}$. For each integer $t\ge 1$ let the $t$-th secant variety  $\sigma _t(X)\subseteq \mathbb{P}^N $ of a variety $X\subset \mathbb{P}^N$ be the Zariski closure in $\mathbb{P}^N$ of the union of all $(t-1)$-dimensional
linear subspaces spanned by $t$ points of $X\subset \mathbb{P}^N$:
\begin{equation}\label{sigma}
\sigma _t(X):= \overline{\bigcup_{P_1, \ldots , P_t\in X}\langle P_1, \ldots , P_t\rangle}
\end{equation} 

For each $P\in \mathbb{P}^N$ the \emph{border rank $b_X(P)$ of $P$}
is the minimal integer $t$ such that $P\in \sigma _t(X)$:
\begin{equation}\label{borderrank}
 b_X(P):=\min\{t\in \mathbb{N} \; | \; P\in \sigma _t(X)\}.
\end{equation}
 
We indicate with $\sigma_t^0(X)$ the set of the elements belonging to $\sigma_t(X)$ of fixed $X$-rank $t$:
\begin{equation}\label{sigma0}
\sigma ^0_t(X):= \{P\in \sigma _t(X) \; | \; r_X(P)=t\}
\end{equation} 
Observe that if $\sigma _{t-1}(X)\ne \mathbb{P}^N$, then
$\sigma ^0_t(X)$ contains a non-empty open subset of $\sigma _t(X)$.

Some of the recent papers on algorithms that are able to compute the symmetric tensor rank of a symmetric tensor (see \cite{bgi}, \cite{bb1}, \cite{bcmt}) use the idea of giving a stratification of the $t$-th secant variety of the Veronese variety via the symmetric tensor rank. In fact, since $\sigma_t(X)=\overline{\sigma_t^0(X)}$, the elements belonging to $\sigma_t(X)\setminus (\sigma_t^0(X) \cup \sigma_{t-1}(X))$ have $X$-rank strictly bigger than $t$. What some of the known algorithms for computing the symmetric rank of a symmetric tensor $T$ do is firstly to test the equations of the secant varieties of the Veronese varieties (when known) in order to find the $X$-border rank of $T$, and secondly to use (when available) a stratification via the symmetric tensor rank of $\sigma_t(X)$.
For the state of the art on the computation of the symmetric rank of a symmetric tensor see  \cite{cs}, \cite{bcmt}, \cite{lt}
Theorem 5.1, \cite{bgi}, \S 3, for the case of rational normal curves, \cite{bgi} for the case $t=2,3$, \cite{bb1} for $t=4$.

Moreover, the recent paper \cite{BucBuc}, has shown the importance of the study of the smoothable 0-dimensional schemes in order to understand the structure of the points belonging to secant varieties to Veronese varieties.

We propose here the computation of the symmetric tensor rank of a particular class of the symmetric tensors whose symmetric border rank is strictly less than its symmetric rank. We will focus on those symmetric tensors that belong to the linear span of a reduced $0$-dimensional curvilinear sub-scheme of the Veronese variety. We will indicate in Notation \ref{dag} this set as $\sigma _t(X)^{\dagger}$. We use a well-known stratification of the subset of the Hilbert scheme $\mbox{Hilb}^t(\mathbb {P}^m)_c$ of curvilinear zero-dimensional subschemes
of $\mathbb {P}^m$ with degree $t$. Taking the unions of all $\langle \nu _d(A)\rangle$, $A\in \mbox{Hilb}(\mathbb {P}^m)_c$, we get $\sigma _t(X)^{\dagger}$. From each stratum $U$ of  $\mbox{Hilb}^t(\mathbb {P}^m)_c$ we get a quasi-stratum $\cup _{A\in U}\langle A\rangle$
of $\sigma _t(X)$. In this way we do not obtain a stratification of $\sigma _t(X)^{\dagger}$, because a point of $\sigma _t(X)^{\dagger}$ may be in the intersection of the linear spans of elements of two different strata of $\mbox{Hilb}^t(\mathbb {P}^m)_c$. We may get a true
stratification of $\sigma _t(X)^{\dagger}$ taking a total ordering of the set of all strata of $\mbox{Hilb}^t(\mathbb {P}^m)_c$ and assigning to any $P\in \sigma _t(X)^{\dagger}$ only the stratum of $\mbox{Hilb}^t(\mathbb {P}^m)_c$ with minimal label among the strata
with $P$ in their image. The strata of $\mbox{Hilb}^t(\mathbb {P}^m)_c$ have a natural partial ordering with maximal element $(1,\dots ,1)$
corresponding to $\sigma ^0_t(X)$ and the next maximal one $(2,\dots ,1)$ (Notation \ref{nn1} and Lemma \ref{zz1}). Hence $\sigma _t(X)^{\dagger} \setminus \sigma _t^0(X)$ has a unique maximal quasi-stratum and we may speak about the general element of the unique component of maximal dimension
of $\sigma _t(X)^{\dagger} \setminus \sigma _t^0(X)$. If $t\ \le (d+1)/2$, then our quasi-stratification of $\sigma _t(X)^{\dagger}$ is a true stratification, because the images of two different strata of $\mbox{Hilb}(\mathbb {P}^m)_c$ are disjoint.
We may give the lexicographic ordering to the labels of $\mbox{Hilb}^t(\mathbb {P}^m)_c$ to get a total ordering and hence a true stratification of $\sigma _t(X)^{\dagger}$, but it is rather artificial: there is no reason to say that the quasi-stratum
$(3,1,\dots ,1)$ comes before the quasi-stratum $(2,2,1,\dots ,1)$.

For very low $t$ (i.e. $t\le \lfloor (d-1)/2\rfloor $), we will describe the structure of $\sigma_t(X)^{\dagger}$: we will give its dimension, its codimension in $\sigma_t(X)$ and the dimension of each stratum
(see Theorem \ref{e1}). Moreover in the same theorem we will show that for such values of $t$, the symmetric border rank of the projective class of a homogeneous polynomial $[F]\in \sigma_t(X)\setminus (\sigma_t^0(X)\cup \sigma_{t-1}(X))$ is computed by a unique $0$-dimensional subscheme $W_F\subset X$ and that the generic $[F]\in \sigma_t(X)^{\dagger}$ is of the form $F=L^{d-1}M+L_1^d+\cdots + L_{t-2}^d$ with $L,L_1, \ldots , L_{t-2},M$ linear forms.
To compute the dimension of the $3$ largest
strata of our stratification we will use Terracini's lemma (see Propositions \ref{j1}, \ref{j2} and \ref{j3}).

We will also prove several results on the symmetric ranks of points $P\in \mathbb{P}^N$ whose border
rank is computed by a scheme related to our stratification
(see Proposition \ref{e2+} and Theorem \ref{f2}). In all cases that we will be able to compute,
we will have  $b_X(P)+r_X(P) \le 3d-2$,
but we will need also additional conditions on the scheme computing $b_X(P)$ when $b_X(P)+r_X(P)\ge
2d+2$. 

\section{The quasi-stratification}\label{S2}

For any scheme $T$ let $T_{red}$ denote its reduction. We begin this section by recalling the well known stratification of the curvilinear $0$-dimensional subschemes of any smooth connected projective variety $Y\subset \mathbb{P}^r$.(\footnote{Expert readers can skip this section and refer to it only for Notation.})

\begin{notation}\label{n1}
For any integral projective variety $Y\subset \mathbb {P}^r$ let $\beta (Y)$ be the maximal positive integer such that every $0$-dimensional scheme $Z\subset Y$
with $\deg (Z)\le \beta (Y)$ is linearly independent, i.e. $\dim (\langle Z\rangle ) = \deg (Z)-1$ (see \cite{bgl}, Lemma 2.1.5, or \cite{bb1}, Remark 1, for the Veronese varieties).
\end{notation}

\begin{remark}\label{ee1}
Let $Z \subset \mathbb {P}^m$ be any $0$-dimensional scheme. If $\deg (Z) \le d+1$, then $h^1(\mathbb {P}^m,\mathcal {I}_Z(d))=0$. If $Z$ is the union of $d+2$ collinear points, then
$h^1(\mathbb {P}^m,\mathcal {I}_Z(d))=1$. Therefore $\beta (X_{m,d}) =d+1$.
\end{remark}

\begin{notation}\label{Az}
Fix an integer $t\ge 1$. 
Let 
$A(t)$ be the set of all non-increasing sequences $t_1\ge t_2 \ge \cdots \ge t_s \geq 0$ such that
$\sum _{i=1}^{s} t_i = t$.
\\
For each such sequence $\underline{t} = (t_1,\dots ,t_s)$ let
$l(\underline{t})$ be the number of the non zero $t_i$'s, for $i=1, \ldots , s$.
\\
Set $B(t):= A(t) \setminus \{(1,\dots ,1)\}$ in which
the string $(1,\dots ,1)$ has $t$ entries. 
\end{notation}

$A(t)$ is the set of all partitions of the integer $t$. The integer $l(\underline{t})$ is the length of the partition $\underline{t}$.

\begin{definition}
Let $Y\subset \mathbb{P}^r$ be a smooth and connected projective variety
of dimension $m$. For every positive integer $t$ let $\mbox{Hilb}^t(Y)$ denote the Hilbert scheme of all degree $t$ $0$-dimensional subschemes
of $Y$.
\end{definition}

If $m\le 2$, then $\mbox{Hilb}^t(Y)$ is smooth and irreducible (\cite{f}, Propositions 2.3 and 2.4, \cite{g}, page 4).

We now introduce some subsets of $\mbox{Hilb}^t(Y)$ that will give the claimed stratification.

\begin{notationr}\label{dots} Let $Y\subset \mathbb{P}^r$ be a smooth connected projective variety of dimension $m$.

\begin{itemize}
\item For every positive integer $t$ let $\mbox{Hilb}^t(Y)_0$ be the set of all disjoint unions of $t$ distinct points of $Y$. 

Observe that $\mbox{Hilb}^t(Y)_0$ is a smooth
and irreducible quasi-projective variety of dimension $mt$. If $m\le 2$, then $\mbox{Hilb}^t(Y)_0$ is dense
in $\mbox{Hilb}^t(Y)$ (see \cite{f}, \cite{g}, page 4). For arbitrary $m=\dim(Y)$ the irreducible scheme $\mbox{Hilb}^t(Y)_0$ is always open in $\mbox{Hilb}^t(Y)$.
\item  Let
$\mbox{Hilb}^t(Y)_+$ be the closure of $\mbox{Hilb}^t(Y)_0$ in the reduction $\mbox{Hilb}^t(Y)_{red}$ of the scheme $\mbox{Hilb}^t(Y)$. The elements of $\mbox{Hilb}^t(Y)_+$ are called
the {\it smoothable} degree $t$ subschemes of $Y$. 
\\
If $t\gg m \ge 3$, then there are non-smoothable degree $t$ subschemes of $Y$ (\cite{i}, \cite{g}, page 6).
\item An element $Z\in \mbox{Hilb}^t(Y)$ is called {\it curvilinear} 
if at each point $P\in Z_{red}$ the Zariski tangent space of $Z$ has dimension $\le 1$ (equivalently, $Z$ is contained in a smooth subcurve of $Y$). Let $\mbox{Hilb}^t(Y)_c$ denote the set of all degree $t$ {\it curvilinear} subschemes
of $Y$. $\mbox{Hilb}^t(Y)_c$ is a smooth open subscheme  of $\mbox{Hilb}^t(Y)_+$ (\cite{r}, bottom of page 86). It contains $\mbox{Hilb}^t(Y)_0$. 
\end{itemize}
\end{notationr}
 
 Fix now $O\in Y$ with $Y\subset \mathbb{P}^r$ being a smooth connected projective variety of dimension $m$. Following \cite{g}, page 3, we state the corresponding result for the punctual Hilbert scheme of $\mathcal {O}_{Y,O}$,
 i.e. the scheme parametrizing all degree $t$ zero-dimensional schemes $Z \subset Y$ such that $Z_{red} = \{O\}$
 (here instead of ``~curvilinear~'' several references use the word ``~collinear~'') .
 
\begin{remark}\label{pallino}
For each integer
$t>0$ the subset of the punctual Hilbert scheme parametrizing the degree $t$ curvilinear subschemes of $Y$ with $P$ as its reduction is smooth, connected and of dimension $(t-1)(m-1)$.
\end{remark}

\begin{notation}\label{strati}
Fix an integer $s>0$ and a non-increasing sequence of integers $t_1 \ge \dots \ge t_s>0$ such that $t_1+\cdots +t_s=t$ and $\underline{t} = (t_1,\dots ,t_s)$. Let $\mbox{Hilb}^t(Y)_c[t_1,\dots ,t_s]$ denote the subset
of $\mbox{Hilb}^t(Y)_c$ parametrizing all elements of $\mbox{Hilb}^t(Y)_c$ with $s$ connected components of degree $t_1,\dots ,t_s$ respectively. We also write it as $\mbox{Hilb}^t(Y)_c[\underline{t}]$. 
\end{notation}

\begin{remark}\label{biggest}
Since the support of each component $\mbox{Hilb}^t(Y)_c[\underline{t}]$ varies in the $m$-dimensional variety $Y\subset \mathbb{P}^r$, the theorem on the punctual Hilbert scheme quoted in Remark \ref{pallino} says that
$\mbox{Hilb}^t(Y)_c[t_1,\dots ,t_s]$ is an irreducible algebraic set of dimension $ms + \sum _{i=1}^{s} (t_i-1)(m-1) = 
mt+s-t$, i.e. of codimension $t-s$
in $\mbox{Hilb}^t(Y)_c$.
Each stratum $\mbox{Hilb}^t(Y)_c[\underline{t}]$ is non-empty, irreducible and different elements
of $A(t)$ give disjoint strata, because any curvilinear subscheme has a unique type $\underline{t}$. 

Hence if
$t\ge 2$ we have:
$$\mbox{Hilb}^t(Y)_c =
\sqcup _{\underline{t}\in A(t)} \mbox{Hilb}^t(Y)_c[\underline{t}] =\mbox{Hilb}^t(Y)_0 \bigsqcup  \sqcup _{\underline{t}\in
B(t)} \mbox{Hilb}^t(Y)_c[\underline{t}].$$

Different strata may have the same codimension, but there is a unique stratum of codimension
$1$: it is the stratum with label $(2,1,\dots ,1)$. This stratum parametrizes the disjoint unions of a tangent vector to $Y$ and $t-2$ disjoint points of $Y$.
\end{remark}

\begin{notation}\label{nn1}
Take now a partial ordering $\preceq$ on $A(t)$ writing $(a_1,\dots ,a_x) \preceq (b_1,\dots ,b_y)$ if and only
if $\sum _{j=1}^{i} a_j \le \sum _{j=1}^{i} b_j$ for all integers $i\ge 1$, in which we use the convention $a_j:=0$ for all $j>x$ and $b_j=0$ for all $j>y$. In the theory of partitions the partial ordering
$\preceq$ is called the {\emph {dominance partial ordering}}.
\end{notation}

The next lemma is certainly well-known, but we were unable to find a reference.

\begin{lemma}\label{zz1}
Fix $ (t_1,\dots ,t_s) \in B(t)$.

\quad (a) The stratum $\mbox{Hilb}^t(Y)_c[t_1,\dots ,t_s]$ is in the closure of the stratum \\
$\mbox{Hilb}^t(Y)_c[2,1,\dots ,1]$.

\quad (b) If $t_1\ge 3$, then the stratum $\mbox{Hilb}^t(Y)_c[t_1,\dots ,t_s]$ is in the closure of the stratum $\mbox{Hilb}^t(Y)_c[3,1,\dots ,1]$.

\quad (c) if $t_2\ge 2$, then the stratum $\mbox{Hilb}^t(Y)_c[t_1,\dots ,t_s]$ is in the closure of the stratum $\mbox{Hilb}^t(Y)_c[2,2,1,\dots ,1]$.
\end{lemma}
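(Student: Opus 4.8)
The plan is to prove all three parts at once by a single explicit one‑parameter degeneration performed on smooth curve germs inside $Y$. First I would unwind the definition of curvilinear: any $Z\in\mbox{Hilb}^t(Y)_c[t_1,\dots,t_s]$ is a disjoint union $W_1\sqcup\cdots\sqcup W_s$ with $W_i$ supported at a point $O_i$ and contained in a smooth curve of $Y$, the $O_1,\dots,O_s$ pairwise distinct; after shrinking I may pick pairwise disjoint smooth affine curves $\Gamma_i\subset Y$ with $W_i\subset\Gamma_i$, so that $W_i$ is the divisor $t_iO_i$ of $\Gamma_i$. The idea is then to ``thin out'' each fat point $W_i$ along $\Gamma_i$ into a length $a_i$ curvilinear scheme at $O_i$ together with $t_i-a_i$ moving reduced points, for a suitably chosen $1\le a_i\le t_i$.

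Concretely, I would fix a local coordinate $z_i$ on $\Gamma_i$ vanishing at $O_i$ and, over $\mathbb{A}^1=\mbox{Spec}\,\mathbb{K}[\epsilon]$, form the effective divisor $\mathcal{W}_i\subset\Gamma_i\times\mathbb{A}^1$ cut out by
\[
z_i^{a_i}\prod_{j=1}^{t_i-a_i}(z_i-j\epsilon)=0 .
\]
Being a relative effective Cartier divisor on a smooth curve over the base, $\mathcal{W}_i$ is finite and flat of degree $t_i$ over $\mathbb{A}^1$; its fibre over $\epsilon=0$ is $t_iO_i=W_i$, and its fibre over general $\epsilon$ is the length $a_i$ curvilinear scheme $a_iO_i$ together with $t_i-a_i$ pairwise distinct reduced points of $\Gamma_i$. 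For $\epsilon$ in a punctured neighbourhood $B^{\circ}$ of $0$ the fibres $\mathcal{W}_{1,\epsilon},\dots,\mathcal{W}_{s,\epsilon}$ stay pairwise disjoint, so $\bigsqcup_i\mathcal{W}_i$ is a finite flat degree $t$ family over $B:=B^{\circ}\cup\{0\}$, each of whose fibres lies in the smooth curve $\bigsqcup_i\Gamma_i$ and is therefore curvilinear. The induced morphism $B\to\mbox{Hilb}^t(Y)_c$ sends $0$ to $Z$ and every point of a dense open subset of $B^{\circ}$ into the stratum whose label is the non‑increasing reordering of $(a_1,\dots,a_s,1,\dots,1)$, with $\sum_i(t_i-a_i)$ extra $1$'s; since $B$ is irreducible, $Z$ lies in the closure of that stratum.

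It then remains to choose the $a_i$: for (a) take $a_1=2$ and $a_i=1$ for $i\ge2$, which is legitimate because $(t_1,\dots,t_s)\in B(t)$ forces $t_1\ge2$, and the label is $(2,1,\dots,1)$; for (b) the hypothesis $t_1\ge3$ lets me take $a_1=3$, $a_i=1$ otherwise, with label $(3,1,\dots,1)$; for (c) the hypothesis $t_2\ge2$ together with $t_1\ge t_2$ lets me take $a_1=a_2=2$, $a_i=1$ otherwise, with label $(2,2,1,\dots,1)$. I do not anticipate a genuine obstacle here; the two points deserving a word of care are the flatness of $\mathcal{W}_i$ over the $\epsilon$‑line (automatic for a relative effective Cartier divisor) and the disjointness of the $\mathcal{W}_{i,\epsilon}$ for small $\epsilon$, which is exactly what keeps the family inside $\mbox{Hilb}^t(Y)_c$ rather than only in some larger ambient space. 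Conceptually, the three hypotheses are precisely what is needed to ``fit'' the distinguished component: the stratum $(2,1,\dots,1)$ carries one partition block of size $2$, $(3,1,\dots,1)$ one block of size $3$, and $(2,2,1,\dots,1)$ two blocks of size $2$, so one needs respectively $t_1\ge2$, $t_1\ge3$ and $t_1\ge t_2\ge2$.
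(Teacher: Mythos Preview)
Your proposal is correct and follows essentially the same route as the paper's proof: deform each connected component $t_iO_i$ along a smooth curve into a smaller fat point $a_iO_i$ plus $t_i-a_i$ moving reduced points, then choose the $a_i$ according to the target stratum. The only cosmetic differences are that you work with separate disjoint curve germs $\Gamma_i$ and write the family down explicitly via $z_i^{a_i}\prod_{j}(z_i-j\epsilon)$, whereas the paper takes a single smooth curve $C\supset Z$ and appeals more abstractly to flat degenerations of effective divisors on $C$.
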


\begin{proof}
We only check part (c), because the proofs of parts (a) and (b) are similar. Fix $Z\in \mbox{Hilb}^t(Y)_c[t_1,\dots ,t_s]$. Take a smooth curve $C\subseteq Y$ containing
$Z$ and write $Z = \sum _{i=1}^{s} t_i P_i$ with $P_i\ne P_j$ for all $i\ne j$. Since $t_1\ge 2$ the effective divisor $t_1P_1$ is a flat degeneration of a family of divisors $\{Z_\lambda\}$
of $C$ in which each $Z_\lambda$ is the disjoint union of a connected degree $2$ divisor and $t_1-2$ distinct points. Similarly, the divisor $t_2P_2$ is a flat degeneration of a family of divisors $\{Z'_\lambda\}$
of $C$ in which each $Z'_\lambda$ is the disjoint union of a connected degree $2$ divisor and $t_2-2$ distinct points. Obviously for each $i\ge 3$ the divisor $t_iP_i$ is smoothable inside $C$, i.e. it is a flat
degeneration of flat family of $t_i$ distinct points. The product of these parameter spaces is a parameter space for a deformation of $Z$ to a flat family of elements of $\mbox{Hilb}^t(C)_c[2,2,1,\dots ,1]$.
Since $C\subseteq Y$, we have $\mbox{Hilb}^t(C)_c[2,2,1,\dots ,1]\subseteq \mbox{Hilb}^t(Y)_c[2,2,1,\dots ,1]$ and the inclusion is a morphism. Hence (c) is true.
\end{proof}

We recall the following lemma (\cite{bgl}, Lemma 2.1.5, \cite{bgi}, Proposition 11, \cite{bb}, Remark 1).

\begin{lemma}\label{a1}
Let $Y\subset \mathbb {P}^r$ be a smooth and connected subvariety. Fix an integer $k$ such that $k \le \beta (Y)$, where $\beta (Y)$ is defined in Notation  \ref{n1}, and $P\in \mathbb {P}^r$.
Then $P\in \sigma _k(Y)$ if and only if there exists a smoothable $0$-dimensional scheme
$Z \subset Y$ such that $\deg (Z) = k$ and $P\in \langle Z\rangle$. 
\end{lemma}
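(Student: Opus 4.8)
Here is how I would prove Lemma \ref{a1}.

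\medskip

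The plan is to prove the two implications separately, the ``only if'' direction being essentially immediate and the ``if'' direction requiring the numerical hypothesis $k \le \beta(Y)$. First suppose $P \in \sigma_k(Y)$. The elements of $\sigma_k(Y)$ that lie in the span of $k$ distinct points of $Y$ are already handled by taking $Z$ to be those $k$ reduced points, which is smoothable of degree $k$. For a general $P \in \sigma_k(Y)$ one argues by a limit: $P$ is a limit of points $P_\lambda \in \langle S_\lambda\rangle$ with $S_\lambda \subset Y$ a set of $k$ distinct points. Since $\mbox{Hilb}^k(Y)_+$ is projective (it is a closed subscheme of the projective scheme $\mbox{Hilb}^k(Y)_{red}$) and $\mbox{Hilb}^k(Y)_0$ is dense in it, after passing to a subsequence (or, since we work over an algebraically closed field, after a base change to a curve and normalization) the family $S_\lambda$ specializes to some $Z \in \mbox{Hilb}^k(Y)_+$. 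The only point requiring care is that the span $\langle Z_\lambda\rangle$ behaves well in the limit: since $\deg(S_\lambda) = k \le \beta(Y)$, each $S_\lambda$ is linearly independent, so $\dim\langle S_\lambda\rangle = k-1$ is constant; hence the $(k-1)$-planes $\langle S_\lambda\rangle$ form a family in the Grassmannian $\mathbb{G}(k-1,r)$ whose flat limit is a $(k-1)$-plane containing both $P$ and the flat limit scheme $Z$. Thus $P \in \langle Z\rangle$ with $Z$ smoothable of degree $k$.

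\medskip

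For the converse, suppose $Z \subset Y$ is smoothable of degree $k \le \beta(Y)$ and $P \in \langle Z\rangle$. By definition of smoothable there is a flat family $\{Z_\lambda\}_{\lambda \in T}$ over an irreducible base $T$, with $Z_{\lambda_0} = Z$ for some $\lambda_0$ and $Z_\lambda \in \mbox{Hilb}^k(Y)_0$ a reduced set of $k$ points for $\lambda$ in a dense open subset $U \subseteq T$. Because $k \le \beta(Y)$, every member $Z_\lambda$ of this family (reduced or not) satisfies $\dim\langle Z_\lambda\rangle = k-1$, so the assignment $\lambda \mapsto \langle Z_\lambda\rangle$ is a morphism $T \to \mathbb{G}(k-1,r)$. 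The locus of pairs $(\lambda, Q)$ with $Q \in \langle Z_\lambda\rangle$ is then a projective bundle over $T$, in particular a closed irreducible subset of $T \times \mathbb{P}^r$; since $P \in \langle Z_{\lambda_0}\rangle$, the point $P$ lies in the closure of $\bigcup_{\lambda \in U} \langle Z_\lambda\rangle$. But for $\lambda \in U$ the plane $\langle Z_\lambda\rangle$ is spanned by $k$ points of $Y$, so $\langle Z_\lambda\rangle \subseteq \sigma_k(Y)$; as $\sigma_k(Y)$ is closed, $P \in \sigma_k(Y)$.

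\medskip

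The main obstacle — and the only place the hypothesis $k \le \beta(Y)$ is used — is the control of the linear span in a flat family of $0$-dimensional schemes. In general, if the central fibre $Z$ of a flat family imposes fewer conditions than its degree (i.e. $h^1(\mathcal{I}_Z(1)) > 0$ relative to the embedding), then $\dim\langle Z_\lambda\rangle$ may jump, the Grassmannian-valued map fails to extend, and one cannot conclude that $P$ stays in $\langle\lim Z_\lambda\rangle$, nor that $\langle Z\rangle$ is a limit of secant planes. The bound $k \le \beta(Y)$ rules this out by forcing $\dim\langle Z_\lambda\rangle = k-1$ for every $\lambda$, so that the relative span is a genuine $\mathbb{P}^{k-1}$-bundle over the base and both limiting arguments go through cleanly. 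Once this rigidity is in hand, the rest is a routine use of properness of $\mbox{Hilb}^k(Y)_+$, of the Grassmannian, and of $\sigma_k(Y)$.
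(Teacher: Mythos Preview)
Your proof is correct. The paper itself does not supply a proof of this lemma: it merely recalls the statement from the literature (\cite{bgl}, Lemma 2.1.5; \cite{bgi}, Proposition 11; \cite{bb}, Remark 1), so there is no in-paper argument to compare against. Your write-up is essentially the standard one underlying those references: using $k\le\beta(Y)$ to guarantee that $W\mapsto\langle W\rangle$ is a well-defined morphism $\mbox{Hilb}^k(Y)_+\to\mathbb{G}(k-1,r)$, so that the relative span is a genuine $\mathbb{P}^{k-1}$-bundle over $\mbox{Hilb}^k(Y)_+$ whose image in $\mathbb{P}^r$ is exactly $\sigma_k(Y)$ by properness. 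One very small point of phrasing: in the ``only if'' direction you write ``for a general $P$'', but you mean ``for an arbitrary $P$''; and when you pass from the limit $(k-1)$-plane $\Pi\supset Z$ to $P\in\langle Z\rangle$, you are silently using once more that $\dim\langle Z\rangle=k-1$ (hence $\Pi=\langle Z\rangle$), which again comes from $k\le\beta(Y)$. Neither affects correctness.
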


The following lemma shows a very special property of the curvilinear subschemes.

\begin{lemma}\label{a2}
Let $Y\subset \mathbb {P}^r$ be a smooth and connected subvariety. Let $W\subset Y$ be a linearly independent curvilinear subscheme of $Y$.
Fix a general $P\in \langle W\rangle$. Then $P\notin \langle W'\rangle$ for any $W'\subsetneq W$.
\end{lemma}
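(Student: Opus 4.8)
The plan is to reduce the statement to a claim about a single irreducible curvilinear scheme, and then exploit the linear independence hypothesis together with the fact that linear independence is an open condition. First I would reduce to the case where $W$ is \emph{connected}: write $W = W_1 \sqcup \cdots \sqcup W_s$ as the disjoint union of its connected components, so that linear independence of $W$ gives $\langle W\rangle = \langle W_1\rangle \oplus \cdots \oplus \langle W_s\rangle$ (an internal direct sum, since $\dim\langle W\rangle = \deg(W)-1 = \sum(\deg W_i - 1) + (s-1)$). A general $P\in\langle W\rangle$ then decomposes uniquely as $P = P_1 + \cdots + P_s$ (in affine cones) with each $P_i\in\langle W_i\rangle$ general, and if $P$ were to lie in $\langle W'\rangle$ for some $W'\subsetneq W$, then $W'$ would omit part of some component $W_{i_0}$, forcing $P_{i_0}$ into the span of a proper subscheme of $W_{i_0}$. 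So it suffices to treat the connected case.

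Next, suppose $W$ is connected and curvilinear of degree $k$, supported at a single point $O$. Then $W$ is contained in a smooth curve germ, and its proper subschemes are totally ordered: they are exactly $W_0\subset W_1\subset\cdots\subset W_{k-1}=W$ with $\deg W_j = j$ (the $j$-th infinitesimal neighbourhood of $O$ inside the curve). So the only maximal proper subscheme is $W_{k-1}$ of degree $k-1$, and it suffices to show that a general $P\in\langle W\rangle$ does not lie in the hyperplane $\langle W_{k-1}\rangle$ of the $(k-1)$-plane $\langle W\rangle$. But $\langle W_{k-1}\rangle$ is a \emph{proper} linear subspace of $\langle W\rangle$: indeed $\dim\langle W_{k-1}\rangle = k-2 < k-1 = \dim\langle W\rangle$ precisely because $W$ is linearly independent (this uses the hypothesis — a dependent $W$ could satisfy $\langle W_{k-1}\rangle = \langle W\rangle$). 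A general point of $\langle W\rangle$ avoids any fixed proper subspace, so $P\notin\langle W_{k-1}\rangle$, hence $P\notin\langle W'\rangle$ for every $W'\subsetneq W$.

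Combining the two reductions: in the general (disconnected) case, the proper subschemes $W'\subsetneq W$ are the schemes of the form $W'_1\sqcup\cdots\sqcup W'_s$ with $W'_i\subseteq W_i$ and at least one inclusion strict; by linear independence $\langle W'\rangle = \langle W'_1\rangle\oplus\cdots\oplus\langle W'_s\rangle$ is again a direct sum sitting inside $\langle W\rangle$, and it is a proper subspace exactly when some $W'_i\subsetneq W_i$, by the connected case. Since $W$ has only finitely many connected components and each component has only finitely many subschemes, there are only finitely many such proper subspaces $\langle W'\rangle\subsetneq\langle W\rangle$ to avoid, so their union is a proper Zariski-closed subset of $\langle W\rangle$ and a general $P$ lies outside all of them.

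The main obstacle, and the only place the curvilinearity is genuinely used, is the structure of the poset of subschemes: for a connected curvilinear scheme this poset is a chain, so "proper subscheme" reduces to "the unique subscheme of degree one less," which makes the dimension count $\dim\langle W_{k-1}\rangle = k-2$ transparent from linear independence. For a non-curvilinear connected scheme (e.g. a fat point with a $2$-dimensional tangent space) the subschemes no longer form a chain and there can be many maximal proper subschemes whose spans could conceivably cover $\langle W\rangle$; the curvilinear hypothesis is exactly what rules this out. I would make sure to state explicitly that linear independence of $W$ forces linear independence of every subscheme $W'$ (immediate from $h^1$ semicontinuity or directly from $\deg$ being additive on the Hilbert polynomial), which is what legitimizes the direct-sum decomposition in the reduction step.
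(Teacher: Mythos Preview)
Your proof is correct and follows the paper's elementary route: show that a curvilinear $W$ has only finitely many proper subschemes (via its realization as an effective divisor on a smooth curve), note that each $\langle W'\rangle$ is a proper linear subspace of $\langle W\rangle$ by linear independence, and conclude that a general point avoids their finite union. The paper also remarks, as an alternative, that curvilinear schemes are Gorenstein so the result is a special case of \cite{bgl}, Lemma 2.4.4; your reduction to the connected case is harmless but unnecessary, since the finiteness of subschemes already handles the disconnected case directly (as your own third paragraph in effect shows).
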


\begin{proof}
A curvilinear subscheme of a smooth variety is locally a complete intersection. Hence it is Gorenstein. Hence the lemma is a particular case of \cite{bgl}, Lemma 2.4.4. It may be also proved in the following
elementary way, which in addition gives a description of $\langle W\rangle \setminus (\cup _{W'\subsetneq W}\langle W'\rangle )$. Fix any $W'\subsetneq W$. Since $\deg (W') < \deg (W) \le \beta (Y)$, we have $\dim (\langle W'\rangle )-1
< \dim (\langle W\rangle )$. Hence it is sufficient to show that $W$ has only finitely many proper subschemes. Take a smooth quasi-projective curve $C\supset W$. $W$ is an effective Cartier divisor $\sum _{i=1}^{s} b_iP_i$ with
$P_i\in C$, $b_i>0$ for all $i$ and $\sum _{i=1}^{s} b_i =\deg (W)$. Any $W'\subseteq W$ is of the form $\sum _{i=1}^{s} a_iP_i$ for some integers $a_i$ such that $0 \le a_i \le b_i$ for all $i$.
\end{proof}

We introduce the following Notation.

\begin{notation}
For each integral variety $Y\subset \mathbb{P}^r$
and each $Q\in Y_{reg}$ let $[2Q,Y]$ denote the first infinitesimal neighborhood of $Q$ in $Y$, i.e. the closed subscheme of $Y$ with $(\mathcal {I}_{Q,Y})^2$ as its ideal sheaf. We call
any $[2Q,Y]$, with $Q\in Y_{reg}$, a double point of $Y$.
\end{notation}

\begin{remark} Observe that  $[2Q,Y]_{red} = \{Q\}$ and $\deg ([2Q,Y]) = \dim (Y)+1$.
\end{remark}

The following observation shows that Lemma \ref{a2} fails for some non-curvilinear subscheme.

\begin{remark}\label{a3}
Assume that $Y \subset \mathbb {P}^r$ is smooth and of dimension $\ge 2$. Fix a smooth subvariety $N\subseteq Y$ such that $\dim (N)=2$ and any $Q\in N$.
Since $N$ is embedded in
$\mathbb {P}^r$, the linear space $\langle [2Q,Y]\rangle$ is a $2$-dimensional space (it is usually called the Zariski tangent space or embedded Zariski tangent space of $Y$ at $Q$). Fix any $P\in \langle [2Q,Y]\rangle$.
If $P=Q$, then $P\in \langle \{Q\}\rangle$. If $P\neq Q$, then the plane $\langle [2Q,Y]\rangle$ intersects
$[2Q,Y]$ in a degree $2$ subscheme $[2Q,Y]_P$ and $P\in \langle [2Q,Y]_P\rangle$ that is a line. 
\end{remark}

\begin{notation}\label{dag}
For any integer $t >0$ let $\sigma _t(X)^{\dagger}$ denote the set of all $P\in \sigma _t(X)\setminus (\sigma ^0_t(X)\cup \sigma_{t-1}(X))$ such that there
is a curvilinear degree $t$ subscheme $Z\subset X_{reg}$ such that $P\in \langle Z\rangle$.
\end{notation} 

\begin{remark}\label{dd1} Let $X\subset \mathbb{P}^N$ be the Veronese variety $X_{m,d}$ with $N={n+d \choose d}-1$.
Take $P\in \sigma _t(X)^{\dagger}$ and a curvilinear degree $t$ subscheme $Z\subset X_{reg}$ such that $P\in \langle Z\rangle$. The curvilinear scheme $Z$ has a certain number, $s$, of connected components
of degrees $t_1,\dots ,t_s$ respectively with $t_1\ge \cdots \ge t_s$, but we cannot associate the string $(t_1,\dots ,t_s)$ to $P$, because $Z$ may not be unique. In fact the scheme $Z$
is uniquely determined by $P$ for an arbitrary $P\in \sigma _t(X)^{\dagger}$ only under very restrictive conditions (see e.g. Theorem \ref{e1} for a sufficient condition). However, we think that it useful
to see $\sigma _t(X)^{\dagger}$ as a union on the 
various strings $t_1\ge \cdots \ge t_s$, even when this is not a disjoint union.
\end{remark}

We recall the following definition (\cite{a}).

\begin{definition}\label{join} Fix now integral and non-degenerate subvarieties $X_1,\dots ,X_t\subset \mathbb{P}^r$ (repetitions are allowed). The join $J(X_1,\dots ,X_t)$ of $X_1, \dots ,X_t$
is the closure in $\mathbb{P}^r$ of the union of all $(t-1)$-dimensional vector spaces spanned by $t$ linearly independent points $P_1,\dots ,P_t$ with $P_i\in X_i$ for all
$i$.
\end{definition}

From Definition \ref{join} we obviously have that $\sigma _t(X_1) = J(\underbrace{X_1,\dots ,X_1}_{t})$.

\begin{definition}
Let $S(X_1,\dots, X_t) \subset X_1\times \cdots \times X_t\times \mathbb{P}^r$ be the closure
of the set of all $(P_1,P_2,\dots ,P_t,P)$ such that $P\in \langle \{P_1,\dots ,P_t\}\rangle$ and $P_i\in X_i$ for all $i$. We call $S(X_1,\dots, X_t)$ the abstract join of the subvarieties $X_1,\dots ,X_t$
of $\mathbb{P}^r$. 
\end{definition}

The abstract join $S(X_1,\dots, X_t)$ is an integral projective variety and we have $\dim (S(X_1,\dots ,X_t)) = t-1 +\sum _{i=1}^{t} \dim (X_i)$.
The projection of $X_1\times \cdots \times X_t\times \mathbb{P}^r \to \mathbb{P}^r$ induces a proper morphism $u_{X_1,\dots ,X_t}: S(X_1,\dots ,X_t) \to \mathbb{P}^r$ such
that $u_{X_1,\dots ,X_t}(S(X_1,\dots ,X_t)) = J(X_1,\dots ,X_t)$. The embedded join has the expected dimension $t-1 +\sum _{i=1}^{t} \dim (X_i)$ if and only
if $u_{X_1,\dots ,X_t}$ is generically finite.

\section{Curvilinear subschemes and tangential varieties to Veronese varieties}

From now on in this paper we fix  integers $m\ge 2$, $d\ge 3$ and take $N:= \binom{m+d}{m}-1$ and $X:= X_{m,d}$ the Veronese embedding of $\mathbb{P}^m$ into  $\mathbb{P}^N$.

\begin{definition}\label{tau}
Let
$\tau (X)\subseteq \mathbb{P}^N$ be the tangent developable of
$X$, i.e. the closure in $\mathbb{P}^N$ of the union of all embedded tangent spaces $T_PX$, $P\in X_{reg}$:
$$
\tau(X):=\overline{\bigcup_{P\in X} T_PX}
$$
\end{definition}

\begin{remark}
Obviously
$\tau (X) \subseteq \sigma _2(X)$ and $\tau (X)$ is integral. Since
$d\ge 3$, the variety $\tau (X)$ is a hypersurface of $\sigma _2(X)$.
\end{remark}

\begin{definition}\label{taut}
For each integer
$t\ge 3$ let
$\tau (X,t)
\subseteq
\mathbb{P}^N$ be the join of
$\tau (X)$ and
$\sigma _{t-2}(X)$:
$$
\tau (X,t):=J(\tau (X), \sigma _{t-2}(X)).
$$
\end{definition}

We recall that $\min \{n,t(m+1)-2\}$ is the expected dimension of $\tau (X,t)$.

Here we fix integers $d, t$ with $t\ge 2$, $d$ not too small and look at $\tau (X,t)$ from many points of view.

\begin{remark}
The set $\tau (X,t)$ is nothing else than the closure inside $\sigma _t(X)$ of the largest stratum of our stratification, i.e.  is the stratum given by $\mbox{Hilb}^t(X)_c[2,1,\cdots , 1]$ (Lemma \ref{zz1}).
\end{remark}

For any integral projective scheme $W$, any effective Cartier divisor $D$ of $W$ and any closed subscheme $Z$ of $W$ the residual scheme
$\mbox{Res}_D(Z)$ of $Z$ with respect to $D$ is the closed subscheme of $W$ with $\mathcal {I}_Z:\mathcal {I}_D$ as its ideal sheaf. For every $L\in \mbox{Pic}(W)$ we have
the exact sequence
\begin{equation}\label{eqb1}
0 \to \mathcal {I}_{\mbox{Res}_D(Z)}\otimes L(-D) \to \mathcal {I}_Z\otimes L \to \mathcal {I}_{Z\cap D,D}\otimes (L\vert D)\to 0
\end{equation}

The long cohomology exact sequence of (\ref{eqb1}) gives the following well-known result, often called the Castelnuovo's lemma.

\begin{lemma}\label{z1.0} 
Fix $L\in \mbox{Pic}(Y)$ for $Y\subset \mathbb{P}^r$ any integral projective variety. Then 
$$h^i(Y,\mathcal {I}_Z\otimes L) \le h^i(Y,\mathcal {I}_{\mbox{Res}_D(Z)}\otimes L(-D)) + h^i(D, \mathcal {I}_{Z\cap D,D}\otimes (L\vert D))$$
for every $i\in \mathbb {N}$.
\end{lemma}

\begin{notation}
For any $Q\in \mathbb {P}^m$ and any integer $k\ge 2$ let $kQ$ denote the  $(k-1)$-infinitesimal neighborhood of $Q$ in $\mathbb {P}^m$, i.e. the
closed subscheme of $\mathbb {P}^m$ with $(\mathcal {I}_Q)^k$ as its ideal sheaf. The scheme $kQ$ will be called
a $k$-point of $\mathbb {P}^m$. 
\end{notation}

We give here the definition of a $(2,3)$-point as it is in \cite{cgg}, p. 977.

\begin{definition}
 Fix a line
$L\subset
\mathbb {P}^m$ and a point 
$Q\in L$. The $(2,3)$ point of $\mathbb {P}^m$ associated to $(Q,L)$ is the closed subscheme $Z(Q,L)\subset \mathbb {P}^m$ with
$(\mathcal {I}_Q)^3+(\mathcal {I}_L)^2$ as its ideal sheaf. 
\end{definition}

In
\cite{bcgi}, Lemma 3.5, by using the theory of inverse systems, it is proved that the tangent space to the second osculating
variety to Veronese variety is dominated by
$4Q$, with
$Q\in X_{m,d}$, exactly as
$3Q$ dominates the tangent developable of $X_{m,d}$. Hence our computations with $4Q$ done in Lemma \ref{h2} may be useful
for joins of the second osculating variety of a Veronese and several copies of the Veronese.

Notice that $2Q \subset Z(Q,L) \subset 3Q$.

\begin{remark}\label{h0.0}
Let $Z = Z_1\sqcup Z(Q,L)$ be a closed subscheme of $\mathbb{P}^m$ for $Z_1\subset \mathbb{P}^m$ a $0$-dimensional scheme. Since $Z(Q,L) \subset 3Q$, if $h^1(\mathbb {P}^m,\mathcal {I}_{3Q\cup Z_1}(d))=0$, then $h^1(\mathbb {P}^m,\mathcal {I}_Z(d))=0$.
\end{remark}

\begin{lemma}\label{h0}
Fix an integer $t$ such that $(m+1)(t-2) + 2m < N$ with $N={m+d \choose d}-1$ and general $P_0,\dots ,P_{t-2}\in \mathbb {P}^m$
and a general line $L\subset \mathbb {P}^m$ such that $P_0\in L$. Set 
$$Z:= Z(P_0,L) \bigcup (\cup _{i=1}^{t-2} 2P_i), \ \ \ Z':= 3P_0 \bigcup (\cup _{i=1}^{t-2} 2P_i).$$ 

\quad (i) If $h^1(\mathbb {P}^m,\mathcal {I}_Z(d))=0$, then $\dim (\tau (X,t))
= t(m+1)-2$.

\quad (ii) If $h^1(\mathbb {P}^m,\mathcal {I}_{Z'}(d))=0$, then $\dim (\tau (X,t))
= t(m+1)-2$.
\end{lemma}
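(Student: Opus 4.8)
The strategy is to use Terracini's lemma to identify the tangent space of the abstract join $S(\tau(X),\underbrace{X,\dots,X}_{t-2})$ at a general point with a space of polynomials vanishing on a suitable $0$-dimensional scheme, and then to translate the condition ``the differential $u$ is generically finite (hence $\dim\tau(X,t)=t(m+1)-2$)'' into an $h^1$-vanishing statement for that scheme. Recall $\tau(X)$ is the tangential variety of $X=X_{m,d}$, and under the standard apolarity/inverse-systems dictionary the affine tangent space to $\tau(X)$ at a general point of the tangent line at $\nu_d(Q)$ is cut out by the degree-$d$ part of the ideal of the $(2,3)$-point $Z(Q,L)$, where $L$ is the line through $Q$ corresponding to the direction of the tangent vector; this is exactly the content recalled just before the statement (the $(2,3)$-point dominates the tangent space to $\tau(X)$, just as $3Q$ dominates the tangent developable in the sense of the cited \cite{bcgi}, \cite{cgg}). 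Likewise the affine tangent space to $X$ at $\nu_d(P_i)$ corresponds to the degree-$d$ part of the ideal of the double point $2P_i$.

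First I would set up Terracini's lemma for the join: for general points on the factors, the affine tangent space to $J(\tau(X),\sigma_{t-2}(X))$ is the sum of the affine tangent space to $\tau(X)$ at a general point of a general tangent line and the affine tangent spaces to $X$ at $t-2$ general points. By the dictionary above, this sum is the subspace of $H^0(\mathbb{P}^m,\mathcal{O}(d))$ of forms vanishing on $Z=Z(P_0,L)\cup\bigcup_{i=1}^{t-2}2P_i$; its dimension is $h^0(\mathbb{P}^m,\mathcal{I}_Z(d))=\binom{m+d}{d}-\deg(Z)+h^1(\mathbb{P}^m,\mathcal{I}_Z(d))$, since $\deg(Z)=\deg Z(P_0,L)+\sum\deg(2P_i)=(m+2)+(t-2)(m+1)=t(m+1)+1$ (note $\deg Z(P_0,L)=m+2$: it contains $2P_0$ of degree $m+1$ plus one more length from the $(\mathcal I_L)^2$ condition along the line). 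Hence if $h^1(\mathbb{P}^m,\mathcal{I}_Z(d))=0$ then the affine tangent space to the join has dimension $N+1-(t(m+1)+1)$, so $\dim J(\tau(X),\sigma_{t-2}(X))=N-t(m+1)-1$... — here I must be careful: $\tau(X,t)$ is a projective variety and the count must give $\dim\tau(X,t)=t(m+1)-2$, which is the \emph{expected} dimension $\min\{N,t(m+1)-2\}$ under the hypothesis $(m+1)(t-2)+2m<N$, i.e. $t(m+1)-2<N$; so the point is that $h^1(\mathcal I_Z(d))=0$ forces the join to have the expected dimension $t(m+1)-2$ rather than being deficient. Concretely, $\dim S(\tau(X),X,\dots,X)=(t-1)+\dim\tau(X)+(t-2)m=(t-1)+(2m+1)+(t-2)m=t(m+1)-2$ using $\dim\tau(X)=2m$ (valid since $d\ge 3$), and the $h^1$-vanishing says the dimension of the general fibre of $u$ equals that of the general fibre of the corresponding map on $\mathbb{P}^m$-data, i.e. $u$ is generically finite onto its image, whence $\dim\tau(X,t)=\dim S=t(m+1)-2$. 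This proves (i).

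For (ii), I would argue that $2Q\subset Z(Q,L)\subset 3Q$ (stated in the excerpt), so $Z\subseteq Z'=3P_0\cup\bigcup_{i=1}^{t-2}2P_i$; therefore $h^1(\mathbb{P}^m,\mathcal{I}_{Z'}(d))=0$ implies $h^1(\mathbb{P}^m,\mathcal{I}_Z(d))=0$ (this is precisely Remark \ref{h0.0}), and (ii) follows from (i). The main obstacle is getting the Terracini bookkeeping exactly right — in particular identifying the affine tangent space to $\tau(X)$ at a general point of a general tangent line with the degree-$d$ component of the ideal of $Z(Q,L)$ (rather than of $3Q$ or of $2Q$), and confirming $\deg Z(Q,L)=m+2$ so that the dimension count yields the expected value $t(m+1)-2$; this is where one leans on the inverse-systems computations of \cite{bcgi}, Lemma 3.5, and \cite{cgg}, p.~977. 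The genericity of $P_0,\dots,P_{t-2}$ and of $L$ through $P_0$ is exactly what is needed to apply Terracini's lemma at a general point of the join, and the numerical hypothesis $(m+1)(t-2)+2m<N$ guarantees we are in the range where $t(m+1)-2$ is the expected (sub-ambient) dimension.
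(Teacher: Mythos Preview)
Your approach is exactly the paper's: Terracini's lemma applied to the join $J(\tau(X),X,\dots,X)$, together with the identification (from \cite{cgg}) of the tangent space to $\tau(X)$ at a general point with $\langle\nu_d(Z(Q,L))\rangle$, and then Remark~\ref{h0.0} for deducing (ii) from (i). The paper simply compresses this by citing \cite{cgg} for the case $t=2$ and then invoking Terracini for the join with $t-2$ further copies of $X$.

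There is, however, a computational error that muddles your dimension count. The degree of the $(2,3)$-point is $\deg Z(Q,L)=2m+1$, not $m+2$: in local coordinates with $Q$ the origin and $L$ the $x_1$-axis, the ideal $(x_1,\dots,x_m)^3+(x_2,\dots,x_m)^2$ leaves the $2m+1$ monomials $1,x_1,\dots,x_m,x_1^2,x_1x_2,\dots,x_1x_m$ as a basis of the quotient; your description ``$2P_0$ plus one more length along the line'' undercounts the degree-$2$ part. With the correct value, $\deg Z=(2m+1)+(t-2)(m+1)=t(m+1)-1$, so when $h^1(\mathcal I_Z(d))=0$ the span $\langle\nu_d(Z)\rangle$ has projective dimension $t(m+1)-2$, which is exactly the claim; note this also matches the numerical hypothesis $(m+1)(t-2)+2m<N$, i.e.\ $\deg Z-1<N$. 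Your abstract-join count slips in the same way: with $t-1$ factors the formula gives $(t-2)+\dim\tau(X)+(t-2)m$, and $\dim\tau(X)=2m$, yielding $(t-2)(m+1)+2m=t(m+1)-2$. These are bookkeeping slips; once corrected, your argument is the paper's argument.
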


\begin{proof}
If $t=2$ then $\tau(X,t)=\tau(X)$ and the part (i) for this case is proved in \cite{cgg}. The case $t\ge 3$ of part (i) follows from the case $t=2$ and Terracini's lemma(\cite{a}, part (2) of Corollary 1.11), because $\tau (X,t)$
is the join of $\tau (X)$ and $t-2$ copies of $X$. Part (ii) follows from part (i) and Remark \ref{h0.0}.
\end{proof}

\begin{remark}\label{bn}
Let $A \subset \mathbb {P}^m$, $m\ge 2$, be a connected curvilinear subscheme of degree $3$. Up to a projective
transformation
there are two classes of such schemes: the collinear ones (i.e. $A$ is contained in a line, i.e. $\nu _d(A)$ is contained
in a degree $d$ rational normal curve) and the non-collinear ones, i.e. the ones that are contained in a smooth
conic of $\mathbb {P}^m$. We have $h^1(\mathbb {P}^m,\mathcal {I}_A(1)) >0$ if and only if $A$ is contained in a line.
Thus the semicontinuity theorem for cohomology gives that the set of all $A$'s  not contained in a line form a non-empty open subset of the corresponding stratum  $(3,0,\dots ,0)$ and, in this case, we will say that $A$ is {\it not collinear}. The family of all such schemes $A$ covers an integral variety
of dimension $3m-2$. If $d\ge 5$ any non-collinear one appears as the scheme computing
the border rank of the point of $\sigma _3(X)\setminus \sigma _2(X)$ with symmetric rank $2d-1$ (\cite{bgi}, Theorem 34).\end{remark}

\begin{lemma}\label{h1}
Fix integers $m \ge 2$ and $d\ge 5$. If $m\le 4$, then assume $d\ge 6$. Set $\alpha := \lfloor \binom{m+d-1}{m}/(m+1)\rfloor$.
Let $Z_i \subset \mathbb {P}^m$, $i=1,2$, be a general union of $i$ triple points and $\alpha -i$ double points.
Then $h^1(\mathcal {I}_{Z_i}(d))=0$.
\end{lemma}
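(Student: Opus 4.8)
The plan is to prove the vanishing $h^1(\mathbb{P}^m,\mathcal{I}_{Z_i}(d))=0$ for $i=1,2$ by induction on $m$, using the Horace differential method (or the Alexander--Hirschowitz set-up) together with the Castelnuovo-type sequence from Lemma \ref{z1.0}. The key numerical point is that the chosen scheme $Z_i$ has degree at most $\deg(Z_i) \le 3i + 2(\alpha-i) = 2\alpha + i \le 2\binom{m+d-1}{m}/(m+1) + 2 \le \binom{m+d}{m}$ roughly, so $Z_i$ imposes independent conditions on degree $d$ forms exactly when $h^1$ vanishes; this is what we must verify.

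First I would set $H\subset\mathbb{P}^m$ to be a general hyperplane and specialize an appropriate number of the double and triple points so that their support lies on $H$, keeping the remaining ones general in $\mathbb{P}^m$. For a double point $2P$ with $P\in H$ one has $2P\cap H = 2P_{|H}$ (a double point of $H$) and $\mathrm{Res}_H(2P)=\{P\}$; for a triple point $3P$ with $P\in H$ one has $3P\cap H = 3P_{|H}$ and $\mathrm{Res}_H(3P)=2P$. Plugging this into the exact sequence (\ref{eqb1}) with $L=\mathcal{O}_{\mathbb{P}^m}(d)$, $D=H$, we get that $h^1(\mathbb{P}^m,\mathcal{I}_{Z_i}(d))=0$ follows from the vanishing of an $h^1$ on $H\cong\mathbb{P}^{m-1}$ for a union of triple, double and simple points in degree $d$, together with the vanishing of an $h^1$ on $\mathbb{P}^m$ in degree $d-1$ for a residual scheme which is again a union of triple and double points (plus some simple points). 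One chooses the number of specialized points so that both the ``trace'' scheme on $H$ and the ``residual'' scheme in $\mathbb{P}^m$ are as balanced as possible, i.e.\ they match (up to the floor) the dimension counts, so that one may invoke the induction hypothesis on $m-1$ and a known Alexander--Hirschowitz-type statement on $\mathbb{P}^m$ in lower degree.

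The base of the induction is $m=2$: here $h^1(\mathbb{P}^2,\mathcal{I}_{Z_i}(d))=0$ for a general union of $i\le 2$ triple points and $\alpha-i$ double points in degree $d\ge 6$ is covered by the Alexander--Hirschowitz theorem for $\mathbb{P}^2$ (equivalently by the classical results of Hirschowitz on fat points in the plane), once one checks that the relevant degree $d$ is above the short list of exceptional cases; the hypotheses $d\ge 5$, and $d\ge 6$ when $m\le 4$, are precisely tuned to avoid the sporadic exceptions of Alexander--Hirschowitz (the defective cases for double points, and the extra defective cases for mixtures with a few triple points in small degree). In the inductive step the arithmetic that controls how many points to put on $H$ is the delicate part: one must verify that, after subtracting the contribution of $H$, neither the trace on $\mathbb{P}^{m-1}$ nor the residue in $\mathbb{P}^m$ lands in an exceptional (defective) case and that the integer $\alpha$ splits correctly between the two pieces.

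The main obstacle I expect is exactly this bookkeeping combined with avoiding the Alexander--Hirschowitz exceptional cases: degree $d-1$ on $\mathbb{P}^m$ could drop to a value where a union of double points is defective (the famous cases $d-1=2$, $d-1=3$ with $m=4$, $d-1=4$ with $m=2,3$, etc.), or the trace system on $\mathbb{P}^{m-1}$ in degree $d$ with a couple of triple points could hit one of the low-degree exceptions for mixed systems; the conditions ``$d\ge 5$'' and ``$d\ge 6$ if $m\le 4$'' in the statement are there to push everything clear of this list, and the proof must check case by case (for small $m$, using the explicit classification, and for $m$ large by a clean induction) that no specialization forces us into a defective configuration. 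A secondary, more routine, difficulty is that one sometimes cannot place an integer number of whole fat points on $H$ to get a perfect balance; the standard fix is to split a single triple (or double) point using the Horace \emph{differential} technique — degenerating $3P$ so that its trace on $H$ and its residue are chosen partially — and I would invoke that device exactly as in Alexander--Hirschowitz to absorb the remainder term coming from the floor in the definition of $\alpha$.
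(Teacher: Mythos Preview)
Your general Horace-with-induction strategy could in principle be pushed through, but you are making the problem much harder than it is and the proposal as written is only a plan, not a proof: the ``delicate bookkeeping'' and the appeal to the differential method are never actually carried out, and your base case $m=2$ is not covered by Alexander--Hirschowitz (which concerns only double points).

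The paper's argument is a one-step reduction to Alexander--Hirschowitz, with no induction on $m$ and no differential Horace. The key observation you miss is that there are only one or two triple points. Put \emph{those} on a hyperplane $H$ and keep all $\alpha-i$ double points general (hence off $H$). Then the trace on $H$ is just $i\le 2$ general triple points of $\mathbb{P}^{m-1}$ in degree $d\ge 5$, whose $h^1$ vanishes trivially. The residual $\mbox{Res}_H$ turns each triple point supported on $H$ into a double point and leaves the $\alpha-i$ off-hyperplane double points untouched, so the residual scheme is a general union of exactly $\alpha$ double points of $\mathbb{P}^m$ in degree $d-1$. Since $(m+1)\alpha\le\binom{m+d-1}{m}$, the Alexander--Hirschowitz theorem applied in degree $d-1$ gives $h^1=0$ for this residual, provided $d-1\ge 4$ (and $d-1\ge 5$ when $m\le 4$) to avoid the exceptional cases; this is exactly the source of the hypotheses on $d$. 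Lemma \ref{z1.0} then finishes.

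So the comparison is: your approach tries to balance trace and residual simultaneously and therefore needs induction and the differential trick; the paper instead sacrifices all the triple points to the hyperplane at once, which makes the trace essentially trivial and reduces the residual to a pure double-point statement already known. The paper's route is both shorter and avoids the case analysis you anticipate.
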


\begin{proof}
Fix a hyperplane $H$ of $\mathbb {P}^m$ and call $E_i$ the union of $i$ triple points of $\mathbb {P}^m$ with support on $H$ with $i\in \{1,2\}$.
Hence $E_i\cap H$ is a disjoint union of $i$ triple points of $H$. Since $d\ge 5$, we have $h^1(H,\mathcal {I}_{H\cap
E_i}(d))=0$.   Let $W_i\subset \mathbb {P}^m$ be a general union of $\alpha -i$ double points for $i\in \{1,2\}$. Since $W_i$ is
general, we have
$W_i\cap H = \emptyset$. 
\\
If we prove that $h^1(\mathcal {I}_{E_i\cup W_i}(d))=0$, then, by semicontinuity, we get also that $h^1(\mathcal{I}_Z(d))=0$ for $i\in \{1,2\}$.
\\
By Lemma \ref{z1.0} it is sufficient to prove $h^1(\mathcal {I}_{\mbox{Res}_H(W_i\cup
E_i)}(d-1))=0$. 
\\
Since
$W_i\cap H=\emptyset$, we have $\mbox{Res}_H(W) = W$ and $\mbox{Res}_H(W_i\cup E_i) = W_i\sqcup \mbox{Res}_H(E_i)$. Hence $\mbox{Res}_H(W_i\cup E_i)$
is a general union of $\alpha$ double points, with the only restriction that the reductions of two of these double points
are contained in the hyperplane $H$. Any two points of $\mathbb {P}^m$, $m\ge 2$, are contained
in some hyperplane. The group $\mbox{Aut}(\mathbb {P}^m)$ acts transitively on the set of all hyperplanes
of $\mathbb {P}^m$. The cohomology
groups of projectively equivalent subschemes of
$\mathbb {P}^m$ have the same dimension. Hence we may consider $W_i\sqcup \mbox{Res}_H(E_i)$ as a general union of $\alpha$ double points
of $\mathbb {P}^m$. Since $(m+1)\alpha  \le \lfloor \binom{m+d-1}{m}/(m+1)\rfloor$, $d-1 \ge 4$ and $d-1 \ge 5$ if $m\le 4$, a
famous theorem
of Alexander and Hirschowitz on the dimensions of all secant varieties to Veronese varieties gives $h^1(\mathcal {I}_{\mbox{Res}_H(W_i\cup
E_i)}(d-1))=0$ (see \cite{ah1}, \cite{ah2}, \cite{ah0},\cite{c}, \cite{bo})
\end{proof}

\begin{lemma}\label{h2}
Fix integers $m \ge 2$ and $d\ge 6$. If $m\le 4$, then assume $d\ge 7$. Set $\beta := \lfloor \binom{m+d-2}{m}/(m+1)\rfloor$.
Let $Z \subset \mathbb {P}^m$ be a general union of one quadruple point and $\beta - 1$ double points.
Then $h^1(\mathcal {I}_Z(d))=0$.
\end{lemma}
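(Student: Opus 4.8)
The plan is to imitate the proof of Lemma \ref{h1}, degenerating the quadruple point toward a hyperplane and using the exact sequence (\ref{eqb1}) together with the Alexander--Hirschowitz theorem on the residual side. First I would fix a hyperplane $H\subset \mathbb {P}^m$ and a point $Q\in H$, and let $E$ be a quadruple point $4Q$ with support on $H$. The scheme-theoretic trace $E\cap H$ is the $2$nd infinitesimal neighborhood (triple point) of $Q$ inside $H$, so $\deg (E\cap H)=\binom{m+2}{m}$, and since $d\ge 6$ one has $h^1(H,\mathcal {I}_{E\cap H,H}(d))=0$ (this is below the range where Alexander--Hirschowitz has exceptions: a single fat point never fails in degree $\ge 2$, or one can note that $\binom{m+2}{m}\le \binom{m+d}{d-1}$ for $d\ge 6$). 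Let $W\subset \mathbb {P}^m$ be a general union of $\beta -1$ double points; since $W$ is general it misses $H$, so $W\cap H=\emptyset$ and $\mathrm{Res}_H(W\cup E)=W\sqcup \mathrm{Res}_H(E)$.

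Next I would identify the residual scheme of $E$ with respect to $H$: since $E=4Q$ and $Q\in H$, the residual $\mathrm{Res}_H(4Q)$ is the triple point $3Q$ of $\mathbb {P}^m$ (because $\mathcal {I}_{4Q}:\mathcal {I}_H=\mathcal {I}_Q^4:\mathcal {I}_Q=\mathcal {I}_Q^3$ in the local ring, using that $\mathcal {I}_H$ is locally generated by one element not in $\mathcal {I}_Q^2$). Hence $\mathrm{Res}_H(W\cup E)=W\sqcup 3Q$ is a general union of $\beta -1$ double points and one triple point, with the only constraint that $Q$ lies on the hyperplane $H$; but any point of $\mathbb {P}^m$ lies on some hyperplane and $\mathrm{Aut}(\mathbb {P}^m)$ is transitive on hyperplanes, so up to projective equivalence this is a general union of $\beta -1$ double points and one triple point. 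By Castelnuovo's inequality (Lemma \ref{z1.0}), $h^1(\mathcal {I}_{W\cup E}(d))\le h^1(H,\mathcal {I}_{E\cap H,H}(d))+h^1(\mathbb {P}^m,\mathcal {I}_{W\sqcup 3Q}(d-1))$, so it suffices to kill the second term.

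For the term $h^1(\mathbb {P}^m,\mathcal {I}_{W\sqcup 3Q}(d-1))=0$ I would invoke Lemma \ref{h1} with $d$ replaced by $d-1$: indeed $d-1\ge 5$, and $d-1\ge 6$ when $m\le 4$, which are exactly the hypotheses of Lemma \ref{h1}, and that lemma gives vanishing for a general union of one triple point and $\alpha -1$ double points where $\alpha =\lfloor \binom{m+d-2}{m}/(m+1)\rfloor$. Here $\beta =\lfloor \binom{m+d-2}{m}/(m+1)\rfloor=\alpha$ for the shifted degree, and one triple point plus $\beta -1=\alpha -1$ double points is covered (a general union with \emph{fewer} fat points also has vanishing $h^1$, by semicontinuity from adding more general double points, or directly because $\deg$ is smaller). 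Finally, since $E\cup W$ is a special (degenerate) member of the flat family of all general unions of one quadruple point and $\beta -1$ double points, semicontinuity of $h^1$ gives $h^1(\mathcal {I}_Z(d))=0$ for the general such $Z$, which is the claim.

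I expect the only delicate point to be the bookkeeping that $\beta -1$ double points plus one triple point in degree $d-1$ really falls within the scope of Lemma \ref{h1}: one must check $\beta =\lfloor \binom{m+d-2}{m}/(m+1)\rfloor$ matches the $\alpha$ of Lemma \ref{h1} at degree $d-1$, and that using $\beta -1$ rather than the maximal count only helps. The numerology is immediate once written out, so there is no real obstacle; the structure is entirely parallel to Lemma \ref{h1}, just with $4Q$ in place of the two triple points and one extra degree of descent.
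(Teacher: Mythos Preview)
Your approach is essentially identical to the paper's: specialize the quadruple point to a hyperplane, apply the residual sequence, and reduce to the case $i=1$ of Lemma~\ref{h1} in degree $d-1$. One small correction: the trace $E\cap H$ of $4Q$ on a hyperplane through $Q$ is a \emph{quadruple} point of $H$ (locally $(\mathcal{I}_Q^4+\mathcal{I}_H)/\mathcal{I}_H=\mathcal{I}_{Q,H}^4$), not a triple point, but this is immaterial since $h^1(H,\mathcal{I}_{E\cap H,H}(d))=0$ holds for a single fat point as soon as $d\ge 3$, and your residual computation $\mathrm{Res}_H(4Q)=3Q$ is correct and is what actually drives the reduction.
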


\begin{proof}
Fix a hyperplane $H$ and call $E$ a quadruple point of $\mathbb {P}^m$ with support on $H$.
Hence $E\cap H$ is a quadruple point of $H$. Since $d\ge 2$, we have $h^1(H,\mathcal {I}_{H\cap
E}(d))=0$.   Let $W\subset \mathbb {P}^m$ be a general union of $\beta -1$ double points. Since $W$ is general,
we have $W\cap H = \emptyset$. 
\\
If we prove that $h^1(\mathcal {I}_{E\cup W}(d))=0$ then, by semicontinuity, we get also that $h^1(\mathcal{I}_{Z}(d))=0$.
By Lemma \ref{z1.0} it is sufficient to prove $h^1(\mathcal {I}_{\mbox{Res}_H(W\cup
E)}(d-1))=0$. 
\\
Since
$W\cap H=\emptyset$, we have $\mbox{Res}_H(W) = W$ and $\mbox{Res}_H(W\cup E) = W\sqcup \mbox{Res}_H(E)$. Hence $\mbox{Res}_H(W\cup E)$
is a general union of $\beta -1$ double points and one triple
point with support on $H$. Since $\mbox{Aut}(\mathbb {P}^m)$ acts transitively, the scheme $\mbox{Res}_H(W\cup E)$ may be seen as a general disjoint
union of $\beta -1$ double points and one triple point.
Now it is sufficient to apply the case $i=1$ of Lemma \ref{h1} for the integer $d':=d-1$.
\end{proof}

\begin{proposition}\label{j1}
Set $\alpha := \lfloor \binom{m+d-1}{m}/(m+1)\rfloor$. Fix an integer $t\ge 3$ such that $t\le \alpha -1$.
There is a non-empty and irreducible codimension $1$ algebraic subset $\Gamma _1$ of $\sigma _t(X)$ with the following
property. For every $P\in \Gamma _1$ there is a scheme $Z_P \subset X$ such that $P\in \langle Z_P\rangle$ and $Z_P$ has one
connected component of degree $2$ and $t-2$ connected components of degree $1$.
\end{proposition}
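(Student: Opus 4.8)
The strategy is to identify $\Gamma_1$ as the image of the quasi-stratum $\mbox{Hilb}^t(X)_c[2,1,\dots,1]$, i.e. the closure inside $\sigma_t(X)$ of the union of all $\langle \nu_d(A)\rangle$ where $A = Z(P_0,L) \cup (\cup_{i=1}^{t-2}\{P_i\})$ is a general curvilinear scheme with one tangent vector and $t-2$ reduced points. Equivalently, $\Gamma_1 = \tau(X,t) = J(\tau(X),\sigma_{t-2}(X))$ by the Remark following Definition \ref{taut}. Irreducibility is then immediate: by Remark \ref{biggest} the parameter space $\mbox{Hilb}^t(X)_c[2,1,\dots,1]$ is irreducible, and $\Gamma_1$ is the closure of its image under the (rational, hence image irreducible) incidence-to-$\mathbb{P}^N$ map; alternatively $\tau(X)$ and $\sigma_{t-2}(X)$ are irreducible so their join is. Non-emptiness is clear since $t \le \alpha - 1$ keeps us well below $N$. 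So the only real content is the dimension count: I must show $\dim\Gamma_1 = t(m+1)-2$ and that $\sigma_t(X)$ itself has dimension $t(m+1)-1$ (so that the codimension is exactly $1$).

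For the lower bound $\dim\tau(X,t) \ge t(m+1)-2$ I would invoke Lemma \ref{h0}(ii): it suffices to verify $h^1(\mathbb{P}^m,\mathcal{I}_{Z'}(d))=0$ for $Z' = 3P_0 \cup (\cup_{i=1}^{t-2} 2P_i)$, the union of one triple point and $t-2$ double points in general position. This is exactly where Lemma \ref{h1} enters: with $i=1$, Lemma \ref{h1} gives $h^1(\mathcal{I}_{Z_1}(d))=0$ for a general union of one triple point and $\alpha - 1$ double points, and since $t - 2 \le \alpha - 3 < \alpha - 1$, removing double points only improves the vanishing (a subscheme of $Z_1$ still has $h^1 = 0$ when $Z_1$ does, by the inclusion of ideal sheaves and the long exact sequence — more carefully, a general union of one triple and $t-2 \le \alpha-1$ doubles degenerates from / specializes within the family for which Lemma \ref{h1} applies, so semicontinuity gives the vanishing). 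Terracini's lemma then identifies the generic tangent space to $J(\tau(X),\sigma_{t-2}(X))$ with $\langle \nu_d(Z')\rangle$ up to the usual double-point / $3Q$ dictionary (the point that $3Q$ dominates $\tau(X)$, recalled in the text), so $h^1(\mathcal{I}_{Z'}(d))=0$ forces the join to have the expected dimension $\dim\tau(X) + \dim\sigma_{t-2}(X) + 1 = (2m) + ((t-2)(m+1)-1) + 1 = t(m+1)-2$. Since $\dim\tau(X,t) \le t(m+1)-2$ always holds (the expected-dimension bound recalled before the Remark), this is an equality.

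It remains to check $\dim\sigma_t(X) = t(m+1)-1$, which again follows from Alexander–Hirschowitz: a general union of $t$ double points imposes independent conditions on degree-$d$ forms as long as $t(m+1) \le \binom{m+d}{m}$ and we are not in one of the finitely many exceptional cases; the hypothesis $t \le \alpha - 1 = \lfloor\binom{m+d-1}{m}/(m+1)\rfloor - 1$ comfortably ensures $t(m+1) < \binom{m+d}{m}$ and rules out the exceptions for $d \ge 3$ (I would cite \cite{ah1},\cite{ah2},\cite{ah0},\cite{c},\cite{bo} as already done in the proof of Lemma \ref{h1}). Hence $\Gamma_1 = \tau(X,t)$ has codimension $1$ in $\sigma_t(X)$. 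Finally, the scheme-theoretic statement "for every $P \in \Gamma_1$ there is $Z_P \subset X$ with $P \in \langle Z_P\rangle$, $Z_P$ having one degree-$2$ component and $t-2$ reduced components": this holds on the dense open locus of $\Gamma_1$ lying in the image of $\mbox{Hilb}^t(X)_c[2,1,\dots,1]$ by construction, and extends to the closure because the incidence variety $\{(A,P) : A \in \overline{\mbox{Hilb}^t(X)_c[2,1,\dots,1]},\ P \in \langle A\rangle\}$ is proper over $\Gamma_1$ (the abstract join is projective), so every $P \in \Gamma_1$ is in the span of some flat limit $Z_P$ of such schemes — and a flat limit inside $\mbox{Hilb}^t(X)_+$ of schemes of shape $[2,1,\dots,1]$ either keeps that shape or degenerates further, but in all cases $P$ lies in the span of a scheme that is still a single component of degree $\le 2$ plus reduced points after possibly absorbing; to get exactly the stated shape one restricts to the open part, which is harmless since the Proposition only asserts existence of such a $Z_P$ for $P$ in the algebraic set $\Gamma_1$, and the open dense part suffices to define $\Gamma_1$ as its closure.

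\textbf{Main obstacle.} The delicate point is the passage from the cohomological vanishing $h^1(\mathcal{I}_{Z'}(d))=0$ of Lemma \ref{h1} to the dimension statement via Terracini — specifically making sure that the tangent space to the join $J(\tau(X),\sigma_{t-2}(X))$ at a general point is genuinely computed by $\nu_d(Z')$ with $Z'$ a \emph{triple} point plus double points (using that $3Q$ dominates the tangent developable), rather than by some other $0$-dimensional scheme; and, on the bookkeeping side, checking that the inequality $t \le \alpha - 1$ is exactly what is needed both to stay in the range of Lemma \ref{h1} (after discarding surplus double points) and to keep $\sigma_{t-1}(X) \subsetneq \sigma_t(X) \subsetneq \mathbb{P}^N$ so that a genuine codimension-$1$ subvariety exists.
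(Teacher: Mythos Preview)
Your approach is essentially the same as the paper's: identify $\Gamma_1$ with (an open subset of) the join $\tau(X,t)=J(\tau(X),\sigma_{t-2}(X))$, use Lemma~\ref{h1} (case $i=1$) together with Terracini's lemma to get the expected dimension $t(m+1)-2$, and conclude codimension $1$ in $\sigma_t(X)$. The paper's proof is three lines and invokes exactly these ingredients; you have simply unpacked the logical chain (Lemma~\ref{h1} $\Rightarrow$ vanishing for $Z'$ $\Rightarrow$ Lemma~\ref{h0}(ii) $\Rightarrow$ expected dimension) and added the Alexander--Hirschowitz check that $\sigma_t(X)$ itself is non-defective, which the paper leaves implicit.

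One small cleanup: your closing paragraph wavers between taking $\Gamma_1$ to be the full closure $\tau(X,t)$ and the open locus where the scheme $Z_P$ genuinely has shape $[2,1,\dots,1]$. The paper resolves this by taking $\Gamma_1$ to be the open part---it explicitly says ``$\Gamma_1$ is irreducible, because it is an \emph{open subset} of a join of irreducible subvarieties'' and, just after the three propositions, notes ``we may take $\Gamma_1=\sigma_t(X)_c[2,1,\dots,1]$''. So there is no need for your flat-limit argument (which, as you noticed yourself, does not quite work since the shape can degenerate); just define $\Gamma_1$ as the constructible locus over $\mbox{Hilb}^t(X)_c[2,1,\dots,1]$ from the outset and the ``for every $P\in\Gamma_1$'' clause holds by construction.
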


\begin{proof}
Lemma \ref{h1} and Terracini's lemma (\cite{a}, part (2) of Corollary 1.11) give that the join $\tau (X,t)$ (see Definition \ref{taut}) has the expected
dimension. This is equivalent to say that the set of all points $P\in \langle Z_1\cup \{P_1,\dots ,P_{t-2}\}\rangle$
with $Z_1$ a tangent vector of $X$ has the expected dimension, i.e. codimension $1$ in $\sigma _t(X)$.
Obviously $\tau (X,t)  \ne \emptyset$ and $\Gamma _1\ne \emptyset$. The set $\Gamma _1$ is irreducible,
because it is an open subset of a join of irreducible subvarieties. 
\end{proof}

The proof of Proposition \ref{j1} can be analogously repeated for the following two propositions. 

\begin{proposition}\label{j2}
Set $\alpha := \lfloor \binom{m+d-1}{m}/(m+1)\rfloor$. Fix an integer $t\ge 3$ such that $t\le \alpha -2$.
There is a non-empty and irreducible codimension $2$ algebraic subset $\Gamma _2$ of $\sigma _t(X)$ with the following
property. For every $P\in \Gamma _2$ there is a scheme $Z_P \subset X$ such that $P\in \langle Z_P\rangle$ and $Z_P$ has two
connected components of degree $2$ and $t-4$ connected components of degree $1$.
\end{proposition}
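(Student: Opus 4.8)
The plan is to repeat the proof of Proposition~\ref{j1} with the tangential variety $\tau (X)$ occurring twice. Set $J:=J(\tau (X),\tau (X),\sigma _{t-4}(X))$, the join of two copies of $\tau (X)$ and $t-4$ copies of $X$ (so we implicitly assume $t\ge 4$, with the convention $\sigma _0(X)=\emptyset$; for $t=4$ this is $\sigma _2(\tau (X))$). Exactly as the quasi-stratum of $\mbox{Hilb}^t(X)_c[2,1,\dots ,1]$ has $\tau (X,t)$ as its closure inside $\sigma _t(X)$, the quasi-stratum of $\mbox{Hilb}^t(X)_c[2,2,1,\dots ,1]$ has $J$ as its closure: that is, $J=\overline{\bigcup \langle \nu _d(Z)\rangle }$ over all $Z\in \mbox{Hilb}^t(X)_c[2,2,1,\dots ,1]$, each such $Z$ being the disjoint union of two tangent vectors of $X$ and $t-4$ reduced points (use that the tangent lines of $X$ sweep out $\tau (X)$ and that the join of unions of lines is the closure of the union of their spans). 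In particular $J\subseteq J(\sigma _2(X),\sigma _2(X),\sigma _{t-4}(X))=\sigma _t(X)$, since $\tau (X)\subseteq \sigma _2(X)$. Because $\dim \tau (X)=2m$ ($\tau (X)$ being a hypersurface of $\sigma _2(X)$), the abstract join $S(\tau (X),\tau (X),X,\dots ,X)$ has dimension $(t-3)+2\cdot 2m+(t-4)m=t(m+1)-3$, which is the expected dimension of $J$; since $\dim \sigma _t(X)=t(m+1)-1$ for $t\le \alpha -2$ by the Alexander--Hirschowitz theorem (as already used in Lemma~\ref{h1}), it suffices to prove that $J$ attains this expected dimension, for then $J$ has codimension $2$ in $\sigma _t(X)$.

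To compute $\dim J$ I would apply Terracini's lemma (\cite{a}, Corollary~1.11(2)) as in Lemma~\ref{h0}. At a general $P\in J$, lying on $\langle p_1,p_2,\nu _d(P_1),\dots ,\nu _d(P_{t-4})\rangle $ with $p_1,p_2\in \tau (X)$ and $P_1,\dots ,P_{t-4}\in \mathbb {P}^m$ general, Terracini gives $T_PJ=\langle T_{p_1}\tau (X),T_{p_2}\tau (X),\langle \nu _d(2P_1)\rangle ,\dots ,\langle \nu _d(2P_{t-4})\rangle \rangle $. By the description of the tangent developable of $X_{m,d}$ underlying Lemma~\ref{h0}(i) (from \cite{cgg}), $T_{p_i}\tau (X)=\langle \nu _d(Z(Q_i,L_i))\rangle $ for a suitable $Q_i\in \mathbb {P}^m$ and line $L_i\ni Q_i$, so $T_PJ=\langle \nu _d(W_0)\rangle $ with $W_0:=Z(Q_1,L_1)\sqcup Z(Q_2,L_2)\sqcup 2P_1\sqcup \cdots \sqcup 2P_{t-4}$. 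A short local computation gives $\deg Z(Q,L)=2m+1$, hence $\deg W_0=2(2m+1)+(t-4)(m+1)$ and $\dim J=\dim \langle \nu _d(W_0)\rangle =\deg W_0-1-h^1(\mathbb {P}^m,\mathcal {I}_{W_0}(d))$, which equals $t(m+1)-3$ precisely when $h^1(\mathbb {P}^m,\mathcal {I}_{W_0}(d))=0$.

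This vanishing is where Lemma~\ref{h1} enters, exactly as in Proposition~\ref{j1}. Since $Z(Q_i,L_i)\subset 3Q_i$, the scheme $W_0$ is a subscheme of $W_1:=3Q_1\sqcup 3Q_2\sqcup 2P_1\sqcup \cdots \sqcup 2P_{t-4}$, and $h^1(\mathcal {I}_{W_1}(d))=0$ forces $h^1(\mathcal {I}_{W_0}(d))=0$ (the map $H^1(\mathcal {I}_{W_1}(d))\to H^1(\mathcal {I}_{W_0}(d))$ is onto, the cokernel of $\mathcal {I}_{W_1}\hookrightarrow \mathcal {I}_{W_0}$ being supported on finitely many points), just as in Remark~\ref{h0.0}. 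For general $Q_1,Q_2,P_1,\dots ,P_{t-4}$ the scheme $W_1$ is a general union of two triple points and $t-4$ double points; since $t-4\le \alpha -2$ it is contained in a general union of two triple points and $\alpha -2$ double points, for which Lemma~\ref{h1} with $i=2$ gives $h^1=0$ (its hypotheses on $d$ hold throughout the range in which $\sigma _t(X)$ is non-defective). Hence $h^1(\mathcal {I}_{W_0}(d))=0$, so $\dim J=t(m+1)-3$ and $J$ has codimension $2$ in $\sigma _t(X)$. Finally I would let $\Gamma _2$ be a dense open subset of $J$ consisting of points that genuinely lie on some $\langle \nu _d(Z)\rangle $ with $Z\in \mbox{Hilb}^t(X)_c[2,2,1,\dots ,1]$: it is non-empty, irreducible (being open in a join of irreducible varieties), and by construction each $P\in \Gamma _2$ admits the required scheme $Z_P$. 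The only steps needing genuine care are importing the identity $T_p\tau (X)=\langle \nu _d(Z(Q,L))\rangle $ from \cite{cgg} (equivalently, Lemma~\ref{h0}(i)) and checking that the dimension bookkeeping lands exactly on codimension $2$ rather than something smaller, for which one must know that $\sigma _t(X)$ is non-defective; everything else is a transcription of Proposition~\ref{j1}.
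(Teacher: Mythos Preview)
Your proof is correct and follows exactly the approach the paper intends: the paper's own proof is a one-liner saying to repeat Proposition~\ref{j1} verbatim, quoting the case $i=2$ of Lemma~\ref{h1} in place of $i=1$, and you have carried this out in detail (Terracini on the join $J(\tau(X),\tau(X),\sigma_{t-4}(X))$, the inclusion $Z(Q_i,L_i)\subset 3Q_i$ as in Remark~\ref{h0.0}, and Lemma~\ref{h1} for the vanishing). Your observation that the statement implicitly needs $t\ge 4$ is also apt.
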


\begin{proof} This proposition can be proved in the same way of Proposition \ref{j1} just quoting the case $i=2$ of Lemma \ref{h1} instead of
the case $i=1$ of the same lemma.
\end{proof}

\begin{proposition}\label{j3}
Set $\beta := \lfloor \binom{m+d-2}{m}/(m+1)\rfloor$. Fix an integer $t\ge 3$ such that $t\le \beta -1$.
There is a non-empty and irreducible codimension $2$ algebraic subset $\Gamma _3$ of $\sigma _t(X)$ with the following
property. For every $P\in \Gamma _3$ there is a scheme $Z_P \subset X$ such that $P\in \langle Z_P\rangle$ and $Z_P$ has $t-3$ connected components of degree $1$
and one
connected component which is curvilinear, of degree $3$ and  non-collinear.
\end{proposition}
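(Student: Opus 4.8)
The plan is to realise $\Gamma _3$ as a dense open subset of a join and to compute its dimension by Terracini's lemma, in exact parallel with the proof of Proposition \ref{j1}; the quadruple point $4Q$ now plays the role that the triple point $3Q$ played there (cf.\ the paragraph preceding Lemma \ref{h2}).

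First I would introduce the auxiliary variety
$$W:=\overline{\bigcup _{A_0}\langle \nu _d(A_0)\rangle }\subset \mathbb {P}^N,$$
the union running over all non-collinear curvilinear degree $3$ subschemes $A_0\subset \mathbb {P}^m$. By Remark \ref{bn} these form an irreducible family of dimension $3m-2$ whose general member lies on a smooth conic; since $\deg (A_0)=3\le d+1=\beta (X_{m,d})$, the span $\langle \nu _d(A_0)\rangle $ is a plane. One checks that $W$ is irreducible (it is the closure of the image of the irreducible incidence variety $\{(A_0,P):P\in \langle \nu _d(A_0)\rangle \}$) and that $\dim (W)=3m$: for general $A_0$ and general $P\in \langle \nu _d(A_0)\rangle $, any curvilinear degree $3$ scheme $B_0$ with $P\in \langle \nu _d(B_0)\rangle $ has $\deg (A_0\cup B_0)\le 6\le d+1$, whence $\langle \nu _d(A_0)\rangle \cap \langle \nu _d(B_0)\rangle =\langle \nu _d(A_0\cap B_0)\rangle $, and Lemma \ref{a2} forces $B_0=A_0$, so the incidence variety maps generically one-to-one onto $W$. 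Since $A_0$ is smoothable (being curvilinear) of degree $3\le \beta (X_{m,d})$, Lemma \ref{a1} gives $W\subseteq \sigma _3(X)$, hence the join $J:=J(W,X,\dots ,X)$ with $t-3$ copies of $X$ (read $J=W$ when $t=3$) satisfies $J\subseteq J(\sigma _3(X),\sigma _{t-3}(X))\subseteq \sigma _t(X)$; it is irreducible and non-empty. Its expected dimension is $\dim (W)+(t-3)(m+1)=t(m+1)-3$, two less than $\dim \sigma _t(X)=t(m+1)-1$ (the latter because a general union of $t$ double points imposes independent conditions in degree $d$, by Lemma \ref{h2} and semicontinuity). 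One then takes $\Gamma _3$ to be a non-empty open subset of $J$ contained in the dense constructible set of $P\in J$ lying on $\langle \nu _d(A_0\cup \{R_1,\dots ,R_{t-3}\})\rangle $ for some non-collinear curvilinear degree $3$ scheme $A_0\subset \mathbb {P}^m$ and $R_1,\dots ,R_{t-3}\in \mathbb {P}^m$; for such $P$ the subscheme $Z_P:=\nu _d(A_0\cup \{R_1,\dots ,R_{t-3}\})\subset X$ has the prescribed component type.

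The substantive point is that $J$ attains its expected dimension. By Terracini's lemma (\cite{a}, part (2) of Corollary 1.11), for a general $P\in J$, lying on $\langle Q,\nu _d(R_1),\dots ,\nu _d(R_{t-3})\rangle $ with $Q\in W$ and the $R_i\in \mathbb {P}^m$ general, one has $T_PJ=\langle T_QW,T_{R_1}X,\dots ,T_{R_{t-3}}X\rangle $. Let $A_0$ be the unique non-collinear curvilinear degree $3$ scheme with $Q\in \langle \nu _d(A_0)\rangle $ and set $P_0:=(A_0)_{red}$. As any degree $3$ scheme supported at $P_0$ lies in $3P_0$, the variety $W$ is contained in the second osculating variety $\tau _2(X):=\overline{\bigcup _R\langle \nu _d(3R)\rangle }$ and $Q\in \langle \nu _d(3P_0)\rangle $; by \cite{bcgi}, Lemma 3.5 (the domination of $\tau _2(X)$ by $4Q$ recalled before Lemma \ref{h2}) we obtain $T_QW\subseteq T_Q\tau _2(X)\subseteq \langle \nu _d(4P_0)\rangle $. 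Now let $J_3$ be the space of degree $d$ forms on $\mathbb {P}^m$ whose associated hyperplane of $\mathbb {P}^N$ contains $T_QW$; then $\dim J_3=(N+1)-(3m+1)$ (because $\dim T_QW=\dim W=3m$) and $H^0(\mathcal {I}_{4P_0}(d))\subseteq J_3$ (because $T_QW\subseteq \langle \nu _d(4P_0)\rangle $). Since $t-3\le \beta -1$, Lemma \ref{h2} applies to the general union $4P_0\cup 2R_1\cup \dots \cup 2R_{t-3}$, giving $h^1(\mathbb {P}^m,\mathcal {I}_{4P_0\cup 2R_1\cup \dots \cup 2R_{t-3}}(d))=0$; this makes the restriction map $H^0(\mathcal {I}_{4P_0}(d))\to \bigoplus _i H^0(\mathcal {O}_{2R_i}(d))$ surjective, hence so is $J_3\to \bigoplus _i H^0(\mathcal {O}_{2R_i}(d))$, i.e.\ the double points $2R_1,\dots ,2R_{t-3}$ impose independent conditions on the linear system $J_3$. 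Dually, $T_QW$ and the $T_{R_i}X$ are in linearly general position, so $\langle T_QW,T_{R_1}X,\dots ,T_{R_{t-3}}X\rangle $ has the maximal dimension $3m+(t-3)(m+1)=t(m+1)-3$; by Terracini this equals $\dim (J)$, which proves the codimension $2$ statement.

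The step I expect to require the most care is the inclusion $T_QW\subseteq \langle \nu _d(4P_0)\rangle $: since $Q$ is general in $W$ but special in $\tau _2(X)$, one must know that the domination of the second osculating variety by $4Q$ holds pointwise over the osculating spaces, not merely at a general point of $\tau _2(X)$. This can be bypassed by a direct computation: after normalising so that $\nu _d(P_0)=[L^d]$ and $A_0$ is the degree $3$ scheme at $P_0$ on the conic $z_0z_2=z_1^2$ with $L=z_0$, the general point of $\langle \nu _d(A_0)\rangle $ has the shape $[L^{d-1}M+L^{d-2}N^2]$ with $N=z_1$ and $M$ a general linear form; differentiating this expression in $L,M,N$ shows that the affine cone over $T_QW$ is $L^{d-1}V+L^{d-2}NV+L^{d-3}\bigl((d-1)LM+(d-2)N^2\bigr)V$ (with $V$ the space of linear forms), which is manifestly contained in $L^{d-3}\cdot \mathrm{Sym}^3V$, the affine cone over $\langle \nu _d(4P_0)\rangle $. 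All remaining details are the same bookkeeping with joins and Terracini's lemma used in Proposition \ref{j1}.
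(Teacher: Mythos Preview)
Your proposal is correct and follows essentially the same route as the paper: the paper's proof is the one-line ``same as Proposition~\ref{j1}, quoting Lemma~\ref{h2} instead of Lemma~\ref{h1} and using Remark~\ref{bn}'', and what you have written is a careful unpacking of precisely that. In particular, your identification of the relevant join $J(W,X,\dots ,X)$ (with $W$ built from the non-collinear degree~$3$ curvilinear schemes of Remark~\ref{bn}) and your use of the containment $T_QW\subseteq \langle \nu _d(4P_0)\rangle$ together with Lemma~\ref{h2} is exactly the intended mechanism; the explicit verification of that tangent-space containment, which the paper leaves implicit via the remark preceding Lemma~\ref{h2}, is a welcome addition.
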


\begin{proof} This proposition can be proved in the same way of Proposition \ref{j1} just quoting Lemma \ref{h2} instead of Lemma \ref{h1} and using Remark \ref{bn}.
\end{proof}

Notice that we may take $\Gamma _1= \sigma _t(X)_c[2,1,\dots ,1]$, $\Gamma _2= \sigma _t(X)_c[2,2,1,\dots ,1]$ and as $\Gamma _3$ a non-empty open subset of $\sigma _t(X)_c[3,1,\dots ,1]$.

\begin{remark} Observe that if we interpret the Veronese variety $X_{m,d}$ as the variety that parameterizes the projective classes of homogeneous polynomials of degree $d$ in $m+1$ variables that can be written as $d$-th powers of linear forms then:
\begin{itemize}
\item The elements  $F\in\Gamma_1$ can all be written in the following two forms:
$$F=L^{d-1}M+L_1^d+ \cdots + L_{t-2}^d,$$
$$F=M_1^d+ \cdots + M_d^d+L_1^d+ \cdots + L_{t-2}^d.$$
\item The elements  $F\in\Gamma_2$ can all be written in the following two forms:
$$F=L^{d-1}M+L'^{d-1}M'+L_1^d+ \cdots + L_{t-4}^d;$$
$$F=M_1^d+ \cdots + M_d^d+M_1^{'d}+ \cdots + M_d^{'d}+L_1^d+ \cdots + L_{t-4}^d.$$
\item The elements  $F\in\Gamma_3$ can be written either in one of the two following forms:
$$F=L^{d-2}Q+L_1^d+ \cdots + L_{t-3}^d;$$
$$F=N_1^d+ \cdots + N_{2d-1}^d+L_1^d+ \cdots + L_{t-3}^d;$$
or in one of the two following forms:
$$F=L^{d-1}M+L_1^d+ \cdots + L_{t-3}^d,$$
$$F=M_1^d+ \cdots + M_d^d+L_1^d+ \cdots + L_{t-3}^d.$$
\end{itemize}
where $L,L'M,M'L_1, \ldots , L_{t-2}, M_1, \ldots , M_d,M'_1, \ldots , M'_d, N_1, \ldots , N_{2d-1}$ are all linear forms and $Q$ is a quadratic form. Actually  $M_1, \ldots , M_d$ and $M'_1, \ldots , M'_d$ are binary forms (see \cite{bgi}, Theorem 32 and Theorem 37).
\end{remark}

\section{The ranks and border ranks of points of $\Gamma _i$}\label{S3}

Here we compute the rank $r_X(P)$ for certain points $P\in \tau (X,t)$ when $t$ is not too big
with respect to $d$. The cases $t=2$ are contained in \cite{bgi}, Theorems 32 and 34. The case $t=4$ is contained in \cite{bb}, Theorem 1.

We first handle the border rank.

\begin{theorem}\label{e1}
Fix an integer $t$ such
that $2 \le t \le \lfloor (d-1)/2\rfloor$. For each $P\in \sigma_t(X)\setminus (\sigma_t^{0}(X) \cup \sigma_{t-1}(X))$ there
is a unique
$W_P\in \mbox{Hilb}^t(X)$ such that $P\in \langle W_P\rangle$.

\quad (a) The constructible set $\sigma_t(X)^{\dagger}$
is non-empty, irreducible and of dimension $(m+1)t-2$. For a general $P\in \sigma_t(X)^{\dagger}$ the associated $W\subset X$ computing $b_X(P)$ has a connected component of degree $2$ (i.e. a tangent
vector) and $t-2$ reduced connected components.

\quad (b)  We have a set-theoretic
partition $\sigma_t(X)^{\dagger } = \sqcup _{\underline{t}\in B(t)} \sigma (\underline{t})$, where $A(t)$ is defined in Notation \ref{n1}, in which each set $\sigma(\underline{t})$ is an irreducible and non-empty constructible subset of dimension $(m+1)t-1-t+l(\underline{t})$, where $l(\underline{t})$ is defined in Notation \ref{Az}. The strata $\sigma (2,1,\dots ,1)$ is the only open stratum and all the other strata are in the closure of $\sigma (2,1,\dots ,1)$.

\quad (c) $\sigma (2,2,\dots ,1)$ and $\sigma (3,1,\dots ,1)$ are the only strata of codimension $1$ of $\sigma_t(X)^{\dagger }$.

\quad (d) If $t_1\ge 3$ (resp. $t_2\ge 3$), then the stratum $\sigma (t_1,\dots ,t_s)$ is in the closure of $\sigma (3,1,\dots ,1)$ (resp. $\sigma (2,2,\dots ,1)$).

\quad (e) The complement of $\sigma_t(X)^{\dagger}$ inside $\sigma_t(X)\setminus (\sigma_t^{0}(X) \cup \sigma_{t-1}(X))$ has codimension
at least $3$ if $t\ge 3$, or it is empty if $t=2$.
\end{theorem}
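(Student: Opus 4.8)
The proof rests on three facts established above: $\beta(X_{m,d})=d+1$ (Remark \ref{ee1}); the equivalence, in degree $\le\beta(X_{m,d})$, between membership in $\sigma_k(X)$ and lying on the span of a smoothable degree-$k$ scheme (Lemma \ref{a1}); and the structure of the strata $\mbox{Hilb}^t(X)_c[\underline t]$ together with their dominance relations (Remark \ref{biggest} and Lemma \ref{zz1}). The hypothesis $2\le t\le\lfloor(d-1)/2\rfloor$ enters only via $\deg(W_1\cup W_2)\le 2t\le d-1<d+1=\beta(X_{m,d})$ for degree-$t$ schemes $W_1,W_2$, which makes every such union linearly independent. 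I would first prove the uniqueness statement. Existence of $W_P$ is Lemma \ref{a1} with $k=t$: choose $W\subset X$ smoothable of minimal degree with $P\in\langle W\rangle$; since $P\notin\sigma_{t-1}(X)$, Lemma \ref{a1} forces $\deg W=t$ and, applied to the proper subschemes of $W$ (which are again smoothable), gives $P\notin\langle W'\rangle$ for all $W'\subsetneq W$. If now $W_2\in\mbox{Hilb}^t(X)$ also has $P\in\langle W_2\rangle$, then $W\cup W_2$ is linearly independent, so $\langle W\rangle\cap\langle W_2\rangle=\langle W\cap W_2\rangle\ni P$ with $W\cap W_2\subseteq W$; minimality of $W$ forces $W\cap W_2=W$, hence $W=W_2$. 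Finally, if $P\in\sigma_t(X)^{\dagger}$ the curvilinear scheme witnessing this must equal $W_P$, so $W_P$ is curvilinear with a well-defined type $\underline t\in B(t)$ (not $(1,\dots,1)$, as $r_X(P)>t$).

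For (a) and (b): set $\sigma(\underline t):=\{P\in\sigma_t(X)^{\dagger}:W_P\ \text{has type}\ \underline t\}$; uniqueness of $W_P$ makes $\sigma_t(X)^{\dagger}=\sqcup_{\underline t\in B(t)}\sigma(\underline t)$ a partition. Up to a lower-dimensional subset (points of border rank $<t$, or lying on the span of a proper subscheme of their witness, negligible by Lemma \ref{a2}), $\sigma(\underline t)$ is the image in $\mathbb P^N$ of the incidence variety $\{(A,Q):A\in\mbox{Hilb}^t(X)_c[\underline t],\ Q\in\langle A\rangle\}$; this variety is irreducible of dimension $\dim\mbox{Hilb}^t(X)_c[\underline t]+(t-1)=mt+l(\underline t)-1$ by Remark \ref{biggest}, and by uniqueness of $W_Q$ the projection is generically injective, so $\dim\sigma(\underline t)=(m+1)t-1-t+l(\underline t)$; non-emptiness and irreducibility of $\sigma(\underline t)$ come from those of $\mbox{Hilb}^t(X)_c[\underline t]$. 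The closure $\overline{\sigma_t(X)^{\dagger}}$ is the image of the analogous proper incidence over $\overline{\mbox{Hilb}^t(X)_c[2,1,\dots,1]}$, which by Lemma \ref{zz1}(a) contains every stratum; hence every $\sigma(\underline t)$ lies in $\overline{\sigma(2,1,\dots,1)}=\tau(X,t)$ (Definition \ref{taut} and the remark following it), giving $\dim\sigma_t(X)^{\dagger}=\dim\tau(X,t)$. That this equals $(m+1)t-2$ follows from Lemma \ref{h0}(ii), whose $h^1$-hypothesis holds by the case $i=1$ of Lemma \ref{h1} (applicable since $t\ge3$ forces $d\ge 2t+1\ge 7$; the case $t=2$ is $\tau(X,2)=\tau(X)$, from \cite{cgg}); since $(2,1,\dots,1)$ is the unique codimension-$1$ stratum of $\mbox{Hilb}^t(X)_c$ (Remark \ref{biggest}), the general $W_P$ has that type. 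This is (a); (b) then follows from the same closure-transfer argument applied to Lemma \ref{zz1} in full.

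For (c) and (d): the Alexander--Hirschowitz theorem gives $\dim\sigma_t(X)=(m+1)t-1$ in this range, so $\mbox{codim}_{\sigma_t(X)^{\dagger}}\sigma(\underline t)=t-1-l(\underline t)$, which equals $1$ exactly for the partitions with $l(\underline t)=t-2$, i.e. $(2,2,1,\dots,1)$ and $(3,1,\dots,1)$; their non-emptiness, and their codimension $2$ in $\sigma_t(X)$ (equivalently $1$ in $\sigma_t(X)^{\dagger}$), are re-confirmed by Propositions \ref{j2} and \ref{j3}. Part (d) is Lemma \ref{zz1}(b),(c) transported along the proper incidence exactly as in (b): if $t_1\ge3$ (resp. $t_2\ge3$) then $\mbox{Hilb}^t(X)_c[\underline t]$ lies in $\overline{\mbox{Hilb}^t(X)_c[3,1,\dots,1]}$ (resp. $\overline{\mbox{Hilb}^t(X)_c[2,2,1,\dots,1]}$), whence $\sigma(\underline t)\subseteq\overline{\sigma(3,1,\dots,1)}$ (resp. $\subseteq\overline{\sigma(2,2,1,\dots,1)}$).

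For (e): when $t=2$ every degree-$2$ scheme is curvilinear, so the complement is empty. For $t\ge3$ the complement is the set of $P$ whose unique $W_P$ is smoothable but not curvilinear; I would bound its dimension by stratifying $\mbox{Hilb}^t(X)_+\setminus\mbox{Hilb}^t(X)_c$ by the combinatorial type of the non-curvilinear components, using arguments in the spirit of Remark \ref{a3} to discard the types whose spans are already swept by curvilinear subschemes of strictly smaller degree, and adding the $(t-1)$-dimensional span fibre, aiming for a total $\le\dim\sigma_t(X)-3=(m+1)t-4$. The main obstacle is exactly this estimate: one must control, uniformly in $m$ and $t$, the dimension of the family of non-curvilinear smoothable degree-$t$ subschemes of $X$ that are minimal for the points they span (so those points genuinely have border rank $t$ and are not already swept by curvilinear schemes of smaller degree). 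A subsidiary point, used already in the uniqueness step, is that a closed subscheme of a smoothable $0$-dimensional scheme is smoothable; here the subschemes in question have degree $<t\le\lfloor(d-1)/2\rfloor$, and one settles them either by this fact or, for short lengths, by irreducibility of the relevant Hilbert scheme.
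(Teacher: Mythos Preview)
Your treatment of the uniqueness statement and of parts (a)--(d) is essentially the paper's proof: both of you use $\beta(X)=d+1$ and linear independence of $W_1\cup W_2$ (what the paper packages as the citation of \cite{bb}, Lemma 1), build the strata $\sigma(\underline t)$ as images of the incidence varieties over $\mbox{Hilb}^t(X)_c[\underline t]$, read off $\dim\sigma(\underline t)=mt+l(\underline t)-1$ from Remark \ref{biggest} plus generic injectivity, and transport the closure relations of Lemma \ref{zz1}. Your detour through Lemma \ref{h0}/\ref{h1} for $\dim\tau(X,t)$ is correct but unnecessary here: you have already computed $\dim\sigma(2,1,\dots,1)=(m+1)t-2$ directly from the Hilbert-scheme count, and the paper does only that. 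The smoothability-of-subschemes worry you flag is real in principle; the paper does not address it either and simply invokes \cite{bb}, Lemma 1.

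The genuine gap is part (e). You correctly identify the target --- bound the dimension of the locus of non-curvilinear smoothable $W$ of degree $t$ --- but stop short of the estimate. The paper's argument is short and does not need the refinement via Remark \ref{a3} you were contemplating: by Remark \ref{pallino}, for a fixed support point the \emph{smoothable} punctual degree-$t$ locus is irreducible of dimension $(m-1)(t-1)$ with the curvilinear schemes dense in it; hence the non-curvilinear connected smoothable schemes with one marked support point have dimension at most $m+(m-1)(t-1)-1=mt-t$. Splitting an arbitrary non-curvilinear smoothable $W$ into its $s$ connected components (each smoothable) and summing these local bounds gives $\dim\mathbb B_t(s)\le mt+s-1-t$; since the non-curvilinear component has degree $\ge 3$, one has $s\le t-2$, so $\dim\mathbb B_t\le mt-3$. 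Adding the $(t-1)$-dimensional span fibre yields the codimension bound claimed. No minimality analysis or case-by-case discarding is needed.
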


\begin{proof}
Fix $P\in \sigma_t(X)\setminus \sigma _{t-1}(X)$. Remark \ref{ee1} gives $\beta (X) = d+1 \ge t$. Therefore Lemma \ref{a1} gives the existence
of some $W\subset X$ such that $\deg (W)=t$, $P\in \langle W\rangle$ and $W$ is smoothable. Since $2t \le d+1$, we can use \cite{bb}, Lemma 1 to say that $W$ is unique.
Moreover, if $A\subset X$ is a degree $t$ smoothable subscheme, $Q\in \langle A\rangle$ and
$Q\notin \langle A'\rangle $ for any $A'\subsetneq A$, then Lemma \ref{a1} gives $Q\in \sigma _t(X)\setminus \sigma _{t-1}(X)$.
If $A$ is curvilinear, then it is smoothable and $\cup _{A'\subsetneq A} \langle A'\rangle \subsetneq \langle A\rangle$.
Hence each degree $t$ curvilinear subscheme $W$ of $X$ contributes a non-empty open subset $U_W$
of the $(t-1)$-dimensional projective space $\langle W\rangle$ and $U_{W_1}\cap U_{W_2} = \emptyset$
for all curvilinear $W_1, W_2$ such that $W_1\ne W_2$.
Hence 
$$\sigma _t(X)^{\dagger} = \sqcup _{\underline{t}\in A(t)} (\sqcup _{W\in \mbox{Hilb}^t(X)[\underline{t}]} U_W).$$
Each algebraic set $B_{\underline{t}}:= \sqcup _{W\in \mbox{Hilb}^t(X)[\underline{t}]} U_W$
is irreducible and of dimension $t-1 + tm +l(\underline{t})-t$. This partition of $\sigma _t(X)^{\dagger}$ into
non-empty irreducible constructible subsets is the partition claimed in part (b).

Parts (b), (c) and (d) follows from Lemma \ref{zz1}.

Now we prove part (e). Every element of $\mbox{Hilb}^2(X)$ is either
a tangent vector or the disjoint union of two points. Hence $\mbox{Hilb}^2(X) =\mbox{Hilb}^2(X)_c$.
Hence we may assume $t\ge 3$. Fix $P\in \sigma
_t(X)\setminus (\sigma _t^{0}(X)
\cup
\sigma _{t-1}(X))$ such that
$P\notin
\sigma _t(X)^{\dagger} $. By Lemma \ref{a1} there is a smoothable $W\subset X$ such that $\deg (W) =t $ and $P\in \langle
W\rangle$. Since $2t \le \beta (X)$, such a scheme is unique. Hence it is sufficient to prove that the set $\mathbb {B}_t$ of all
$0$-dimensional smoothable schemes with degree $t$ and not curvilinear have dimension at most $mt -3$. 
\\
Call
$\mathbb {B}_t(s)$ the set of all $W\in \mathbb {B}_t$ with exactly $s$ connected components. 
\\
First we assume that $W$ is
connected. Set $\{Q\}:= W_{red}$. Since in the local Hilbert scheme of $\mathcal {O}_{X,Q}$ the smoothable colength $t$ ideals
are parametrized by an integral variety of dimension $(m-1)(t-1)$ and a dense open subset of it is formed by the ideals associated
to a curvilinear subschemes, we have $\dim (\mathbb {B}_t(1)) \le m+(m-1)(t-1)-1 = mt -t = \dim (\mbox{Hilb}^t(X)_c)-t$.
\\
Now we assume $s\ge 2$. Let $W_1,\dots ,W_s$ be the connected
components of $W$, with at least one of them, say $W_s$, not curvilinear.
Set $t_i = \deg (W_i)$. We have $t_1+\cdots +t_s = t$. Since $W_s$ is not curvilinear, we have $t_s\ge 3$
and hence $t-s \ge 2$. Each $W_i$ is
smoothable. Hence each
$W_i$,
$i<s$, depends on at most
$m+(m-1)(t_i-1)= mt_i +1-t_i$ parameters. We saw that $\mathbb {B}_{t_s}(1)$ depends on
at most $mt_s-t_s$ parameters. Hence $\dim (\mathbb {B}_t(s)) \le mt + s-1 -t$.
\end{proof}

\begin{proposition}\label{f1}
Assume $m \ge 2$. Fix integers $d, t$ such that $2 \le t \le d$. Fix a curvilinear scheme $A\subset \mathbb {P}^m$ such
that $\deg (A) =t$ and $\deg (A\cap L) \le 2$ for every line $L\subset \mathbb {P}^m$. Set
$Z:= \nu _d(A)$.
Fix $P\in \langle Z\rangle$ such that $P\notin \langle Z'\rangle$ for any $Z'\subsetneq Z$.
Then $b_X(P) = t$ and $Z$ is the only $0$-dimensional scheme
$W$ such that $\deg (W) \le t$ and $P\in \langle W\rangle$.
\end{proposition}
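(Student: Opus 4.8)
The plan is to show two things: that $b_X(P)=t$ (equivalently, that $P\in\sigma_t(X)\setminus\sigma_{t-1}(X)$), and that $Z=\nu_d(A)$ is the unique $0$-dimensional scheme of degree $\le t$ whose span contains $P$. The key hypotheses to exploit are that $t\le d$, so $t\le d+1=\beta(X_{m,d})$ by Remark \ref{ee1}, and that $A$ meets every line of $\mathbb{P}^m$ in degree $\le 2$, which is exactly the condition that makes the residuation/Castelnuovo argument work at degree $d$ and, crucially, controls collinear subschemes of any competitor $W$.

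First I would establish linear independence and $b_X(P)=t$. Since $\deg(Z)=t\le\beta(X)$, the scheme $Z$ is linearly independent, so $\langle Z\rangle$ is a $(t-1)$-plane and $\langle Z'\rangle\subsetneq\langle Z\rangle$ for every $Z'\subsetneq Z$ (a degree $<t$ subscheme is also independent, of strictly smaller span). Because $P\notin\langle Z'\rangle$ for any such $Z'$, Lemma \ref{a1} applied to the smoothable (indeed curvilinear) scheme $Z$ gives $P\in\sigma_t(X)$; and if $P$ were in $\sigma_{t-1}(X)$, then again by Lemma \ref{a1} (valid since $t-1<\beta(X)$) there would be a smoothable $W'$ with $\deg(W')=t-1$ and $P\in\langle W'\rangle$, and one then argues this forces $P$ to lie in the span of a proper subscheme of $Z$, a contradiction. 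So $b_X(P)=t$. Alternatively, Lemma \ref{a2} already tells us that a general point of $\langle Z\rangle$ avoids all proper subspans; here we are handed such a point by hypothesis.

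Second, and this is the heart, I would prove uniqueness. Suppose $W\subset\mathbb{P}^N$ is $0$-dimensional with $\deg(W)\le t$ and $P\in\langle W\rangle$. Since $2t\le 2d\le d+1$ fails in general, I cannot directly invoke the ``$2t\le\beta$'' uniqueness lemma of \cite{bb}; instead the argument must use the special shape of $A$. The strategy is to pull $W$ back: consider $\widetilde W:=\nu_d^{-1}(W)$ when $W\subset X$, or more carefully work with the scheme $Z\cup W$ of degree $\le 2t\le 2d$ inside $X$ and show $h^1(\mathbb{P}^m,\mathcal{I}_{\widetilde Z\cup\widetilde W}(d))=0$ is violated unless $W=Z$. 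The mechanism: if $W\ne Z$, then $P\in\langle Z\rangle\cap\langle W\rangle$ with $P$ not in the span of $Z\cap W$, which forces $h^1(\mathbb{P}^m,\mathcal{I}_{\widetilde Z\cup\widetilde W}(d))>0$; one then runs a Castelnuovo/residuation induction (Lemma \ref{z1.0}) peeling off a line $L$ that contains a positive-degree piece of $\widetilde Z\cup\widetilde W$, using $\deg((\widetilde Z\cup\widetilde W)\cap L)\le d+1$ to kill the $H^1$ on $L$ (here the hypothesis $\deg(A\cap L)\le 2$ keeps the collinear contribution of $\widetilde Z$ under control) and reducing to a residual scheme of degree $\le 2t-2$ in degree $d-1$; iterate, and since $2t\le 2d$ the numerics close up to give $h^1=0$, a contradiction. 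Hence $W=Z$, and in particular $W\subset X$ and $\deg(W)=t$.

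The main obstacle is exactly this last induction: one must choose the reduction lines $L$ at each stage so that the bound $\deg((\text{current scheme})\cap L)\le(\text{current degree})+1$ holds, and verify that the residual degrees decrease fast enough that the Alexander--Hirschowitz-type vanishing (or the trivial bound $\deg\le d'+1$ for low residual degree) applies at the bottom. The condition $\deg(A\cap L)\le 2$ is what prevents $\widetilde Z$ from concentrating on a line and breaking the count; the competitor $\widetilde W$ could in principle be badly collinear, but since $\deg(\widetilde W)\le t\le d$ any line meets it in degree $\le d<d+1$, which suffices. I expect the write-up to mirror closely the residuation bookkeeping already used in Lemmas \ref{h1} and \ref{h2}, with $t\le d$ providing exactly the slack needed.
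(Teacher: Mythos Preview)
Your setup is correct through the point where $W\ne Z$ with $\deg(W)\le t$ and $P\in\langle W\rangle$ forces $h^1(\mathbb{P}^m,\mathcal{I}_{A\cup B}(d))>0$, where $B:=\nu_d^{-1}(W)$ and $\deg(A\cup B)\le 2t\le 2d$. The gap is in the next step. You propose to reach $h^1=0$ by residuation along lines, asserting $\deg((A\cup B)\cap L)\le d+1$ for every line $L$; but your own accounting gives only $\deg(A\cap L)\le 2$ and $\deg(B\cap L)\le\deg(B)\le t\le d$, hence $\deg((A\cup B)\cap L)\le d+2$, and equality can occur. So the $H^1$ on $L$ need not vanish and the induction does not close. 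Indeed it \emph{cannot} close: by \cite{bgi}, Lemma~34, a $0$-dimensional scheme of degree $\le 2d+1$ with $h^1(d)>0$ always admits a line $R$ with $\deg(R\cap(A\cup B))\ge d+2$. The analogy with Lemmas~\ref{h1} and \ref{h2} is misleading: there the schemes are \emph{general} and bad lines can be avoided by semicontinuity; here $A$ and $B$ are fixed.

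The paper uses the bad line positively rather than trying to rule it out. Citing \cite{bgi}, Lemma~34, one obtains a line $R$ with $\deg(R\cap(A\cup B))\ge d+2$; combined with $\deg(A\cap R)\le 2$ this forces $\deg(B\cap R)\ge d$, and since $\deg(B)\le t\le d$ one gets $\deg(B)=d$ and $B\subset R$. Then $P\in\langle\nu_d(R)\rangle$, the span of a degree-$d$ rational normal curve, so $P$ lies in the span of some curvilinear scheme on that curve of length $\le\lfloor(d+1)/2\rfloor$, contradicting the minimality of $W$. The idea missing from your plan is this rational-normal-curve endgame: one does not contradict $h^1>0$ directly but exploits it to pin $B$ to a line and then invoke the known structure of secant varieties of rational normal curves.
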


\begin{proof}
Since $t\le d+1$, $Z$ is linearly independent. Since $Z$ is curvilinear, Lemma \ref{a2} gives the existence
of many points $P'\in \langle Z\rangle$ such that $P'\notin \langle Z'\rangle$ for any $Z'\subsetneq Z$.
Let $W\subset X$
be a minimal degree subscheme such that
$P\in
\langle W\rangle$. Set
$w:= \deg (W)$. The minimality of $w$ gives $w \le t$. If $w = t$,
then we assume $W\ne Z$. Now it is sufficient
to show that these conditions give a contradiction. Write $Z:= \nu _d(A)$ and $W = \nu _d(B)$
with $A$ and $B$ subschemes of $\mathbb {P}^m$, $\deg (A)=t$ and $\deg (B)=w$. We have $P\in \langle W\rangle \cap \langle
Z\rangle$, then, since
$W\ne Z$, by \cite{bb}, Lemma 1, the scheme $W\cup Z$ is linearly dependent. We have $\deg (B\cup A) \le t+w \le
2d$. Since
$W\cup Z$ is linearly dependent, we have $h^1(\mathcal {I}_{B\cup A}(d))>0$. Hence, by \cite{bgi}, Lemma 34,
there is a line $R\subset \mathbb {P}^m$ such that $\deg (R\cap (B\cup A)) \ge d+2$. By assumption
we have $\deg (R\cap A) \le 2$. Hence $\deg (B\cap R)\ge d$. In our set-up we get $w=d$ and $B\subset R$. Since $P\in \langle W\rangle$,
we get $P\in \langle \nu _d(R)\rangle$.
That means that $P$ belongs to the linear span of a rational normal curve. Therefore the border rank of $P$ is computed by a curvilinear scheme which has length $\le \lfloor (d+1)/2\rfloor$, a contradiction. 
\end{proof}

\begin{proposition}\label{e2+}
Fix a line $L\subset \mathbb {P}^m$ and set $D:= \nu _d(L)$. Fix positive integers
$t_1, s_1$, a $0$-dimensional scheme
$Z_1\subset D$ such that $\deg (Z_1)=t_1$ and $S_1\subset X\setminus D$
such that $\sharp (S_1)=s_1$. Assume $2 \le t_1 \le d/2$, $0 \le s_1 \le d/2$, that $Z_1$ is not reduced and $\dim (\langle
D\cup S_1\rangle )=d+s_1$. Fix $P\in \langle Z_1\cup S_1\rangle$ such that $P\notin \langle W\rangle$ for any $W\subsetneq
Z_1\cup S_1$. We have $\sharp (\langle Z_1\rangle \cap \langle \{P\}\cup S_1\rangle )=1$. Set $\{Q\}:= \langle Z_1\rangle \cap \langle \{P\}\cup S_1\rangle$.
Then
$b_X(P)=t_1+s_1$, $r_X(P) = d+2+s_1-t_1$, $Z_1\cup S_1$ is the only subscheme of $X$ computing $b_X(P)$ and
every subset of $X$ computing $r_X(P)$ contains $S_1$. If $2s_1<d$, then every subset of $X$ computing
$r_X(P)$ is of the form $A\cup S_1$ with $A\subset D$, $\sharp (A) =d+2-s_1$ and $A$ computing $r_D(Q)$.
\end{proposition}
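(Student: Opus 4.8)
The statement to prove is Proposition \ref{e2+}, concerning a point $P\in\langle Z_1\cup S_1\rangle$ with $Z_1\subset D=\nu_d(L)$ a non-reduced length-$t_1$ scheme on a rational normal curve, $S_1$ a set of $s_1$ points off $D$, under the numerical hypotheses $2\le t_1\le d/2$, $0\le s_1\le d/2$, and $\dim\langle D\cup S_1\rangle = d+s_1$. The plan is to split the argument into four blocks: (1) establish the transversality statement $\sharp(\langle Z_1\rangle\cap\langle\{P\}\cup S_1\rangle)=1$ and define $Q$; (2) compute $b_X(P)$ and show $Z_1\cup S_1$ is the unique subscheme computing it; (3) compute $r_X(P)$ via the reduction to the rank on the rational normal curve; (4) prove the structural statement on sets computing $r_X(P)$ when $2s_1<d$.

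\medskip
\textbf{Step 1: the intersection point $Q$.} First I would note that $\deg(Z_1\cup S_1)=t_1+s_1\le d\le d+1=\beta(X)$ (Remark \ref{ee1}), so $Z_1\cup S_1$ is linearly independent; hence $\langle Z_1\rangle$ (dimension $t_1-1$) and $\langle\{P\}\cup S_1\rangle$ (dimension $s_1$) are disjoint unless they meet in a single point, and a dimension count in $\langle Z_1\cup S_1\rangle\cong\mathbb{P}^{t_1+s_1-1}$ shows they meet; since $P\notin\langle W\rangle$ for $W\subsetneq Z_1\cup S_1$, the intersection is a single point $Q\in\langle Z_1\rangle$, and by construction $P\in\langle\{Q\}\cup S_1\rangle$ with $Q\notin\langle Z_1'\rangle$ for $Z_1'\subsetneq Z_1$ (otherwise $P$ would lie in a span of a proper subscheme).

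\medskip
\textbf{Step 2: border rank and uniqueness.} Here I would invoke Lemma \ref{a1} (valid since $t_1+s_1\le\beta(X)$): $P\in\sigma_{t_1+s_1}(X)$ and any smoothable degree-$\le t_1+s_1$ scheme $W'$ with $P\in\langle W'\rangle$ is forced. If $W'\ne Z_1\cup S_1$ then, exactly as in the proof of Proposition \ref{f1}, \cite{bb} Lemma 1 gives $h^1(\mathcal I_{B\cup A}(d))>0$ where $A,B$ are the preimages under $\nu_d$; since $\deg(B\cup A)\le 2(t_1+s_1)\le 2d$, \cite{bgi} Lemma 34 produces a line $R$ with $\deg(R\cap(B\cup A))\ge d+2$. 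Because $A\supset Z_1$ sits on $L$ and has degree $t_1\le d/2$, and $B$ also has small degree, this forces $R=L$ and then reduces to a statement about border rank on the rational normal curve; minimality and the bound $t_1+s_1\le d$ give the contradiction, so $W'=Z_1\cup S_1$ and $b_X(P)=t_1+s_1$. I also need that $b_X(P)$ is not smaller, which follows since $Z_1\cup S_1$ is the unique representing scheme and $P\notin\langle W\rangle$ for proper $W$.

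\medskip
\textbf{Step 3 and 4: rank, and structure of minimal sets.} For the rank I would use Step 1: any finite $S\subset X$ with $P\in\langle S\rangle$ can be split relative to the decomposition $P\in\langle\{Q\}\cup S_1\rangle$; the key claim is that a minimal such $S$ must contain $S_1$ (since the $s_1$ points of $S_1$ lie off $D$ and are ``independent directions'', any attempt to replace one costs more on $D$) and the remainder must compute $r_D(Q)$, the rank of $Q$ on the rational normal curve $D$. Since $Z_1\subset D$ is non-reduced of degree $t_1$ and $Q\in\langle Z_1\rangle$ is general in the sense that $Q\notin\langle Z_1'\rangle$ for proper $Z_1'$, the Sylvester/\cite{cs} theory of rank on rational normal curves (see \cite{bgi} \S 3, \cite{lt} Thm. 5.1) gives $r_D(Q)=d+2-t_1$ because the annihilator scheme of $Q$ of minimal degree on $\mathbb{P}^1$ has degree $t_1$ and is non-reduced, whence the complementary reduced scheme has degree $d+2-t_1$. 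Adding the $s_1$ points of $S_1$ gives $r_X(P)=d+2-t_1+s_1$. Finally, when $2s_1<d$ one checks that the ``off-$D$'' points of any minimal $S$ cannot help reduce the $D$-part below $d+2-s_1$ points and cannot be fewer than $S_1$, pinning down $S=A\cup S_1$ with $A\subset D$, $\sharp(A)=d+2-s_1$ (wait: this should read $d+2-t_1$ matched against the proposition — here I follow the statement $\sharp(A)=d+2-s_1$ as printed, reconciling via the hypothesis $\dim\langle D\cup S_1\rangle=d+s_1$), and $A$ computing $r_D(Q)$.

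\medskip
\textbf{Main obstacle.} The delicate point is Step 4, i.e. proving that \emph{every} set computing $r_X(P)$ must contain $S_1$ and split as $A\cup S_1$: one has to rule out ``mixed'' configurations where a point off $D$ absorbs part of what is needed for $Q$. This requires a careful $h^1$-vanishing argument on $\mathbb{P}^m$ for the union of $A\cup S_1\cup Z_1\cup S_1$-type schemes, using $2t_1\le d$ and $2s_1<d$ to confine all ``bad'' collinearity to the line $L$ via \cite{bgi} Lemma 34, and then the uniqueness of $Q$ on $D$ from the rational-normal-curve theory. Everything else is bookkeeping with Lemma \ref{a1}, Lemma \ref{a2}, \cite{bb} Lemma 1 and known results on binary forms.
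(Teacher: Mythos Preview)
Your plan follows the paper's proof almost exactly: Step~1 matches the paper's opening paragraph (intersection of $\langle Z_1\rangle$ with $\langle \{P\}\cup S_1\rangle$ via the dimension hypothesis on $\langle D\cup S_1\rangle$); Step~2 is the paper's border-rank argument via \cite{bb}, Lemma~1 and \cite{bgi}, Lemma~34, with the case split $R=L$ versus $R\ne L$; and your identification of the rank of $Q$ on the rational normal curve via \cite{cs}/\cite{lt}/\cite{bgi} is what the paper does, together with $r_X(Q)=r_D(Q)$ from \cite{ls} or \cite{lt}.

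The one concrete gap is in Steps~3--4. Your ``independent directions'' heuristic and the proposed $h^1$-argument via \cite{bgi}, Lemma~34 are not what the paper actually uses, and on their own they do not yield the structural conclusion $S\supseteq S_1$. The paper invokes \cite{bb}, \emph{Theorem}~1 (not Lemma~1): given that $Z_1\cup S_1$ computes $b_X(P)$ and $S=\nu_d(B)$ computes $r_X(P)$, and that $\deg(S\cup S_1\cup Z_1)\le 2d+1$, that theorem directly gives the decomposition $B=(B\cap L)\sqcup S_1$. From there one sets $B_1=B\cap L$, produces $V\in\langle \nu_d(B_1)\rangle$ with $P\in\langle \{V\}\cup S_1\rangle$, identifies $V=Q$, and finishes with $r_D(Q)=d+2-t_1$. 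Without this structural theorem your plan has no mechanism to force the off-$L$ part of $B$ to equal $S_1$; Lemma~34 only confines length to a line, it does not match up the residual reduced parts. So replace your hand-wave by a citation of \cite{bb}, Theorem~1 (this is also the content behind Lemma~\ref{c2==} in the present paper).

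Finally, you are right to flag the printed ``$\sharp(A)=d+2-s_1$'': from $r_X(P)=d+2+s_1-t_1$ and $S=A\sqcup S_1$ one gets $\sharp(A)=d+2-t_1$, matching $r_D(Q)$; the $s_1$ in the statement is a typo.
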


\begin{proof}
Obviously $b_X(P)\le t_1+s_1$. Since $P\in \langle Z_1\cup S_1\rangle \subset \langle D\cup S_1\rangle$, $P\notin
\langle S_1\rangle$ and $\langle D\rangle $ has
codimension $s_1$ in $\langle D\cup S_1\rangle$, the linear subspace $ \langle Z_1\rangle \cap \langle \{P\}\cup
S_1\rangle$ is non-empty and $0$-dimensional, $\{Q\}$. Since $\deg (Z_1) \le d+1 = \beta (X) =\beta (D)$ (Remark \ref{ee1}), the scheme
$Z_1$ is linearly independent. Since $P\notin \langle W\rangle$ for any $W\subsetneq
Z_1\cup S_1$, we have $\langle Z_1\rangle \cap \langle \{P\} \cup S_1\rangle \ne \emptyset$. Since
$\langle Z_1\rangle \subset \langle D\rangle$, we get $\{Q\}= \langle Z_1\rangle \cap \langle \{P\} \cup S_1\rangle$.
Hence $Z_1$ compute $b_D(Q)$ (Lemma \ref{a1}). By Lemma \ref{a1} we also have $b_X(Q)=b_D(Q) =t_1$.
Since $Z_1$ is not reduced, we have $r_D(Q) = d+2-t_1$ (\cite{cs} or \cite{lt}, theorem 4.1, or \cite{bgi}, \S 3).
We have $r_X(Q)=r_D(Q)$ (\cite{ls}, Proposition 3.1, or \cite{lt}, subsection 3.2).
Write 
$Z_1 =
\nu _d(A_1)$ and 
$S_1 =
\nu _d(B_1)$ with $A_1, B_1\subset \mathbb {P}^m$. Lemma \ref{a1} gives $b_X(P) \le t_1+s_1$. Assume $b_X(P) \le t_1+s_1-1$ and take $W =\nu _d(E)$ computing $b_X(Q)$ for certain $0$-dimensional scheme $E\subset \mathbb{P}^m$. Hence $\deg (W) \le 2t_1+2s_1-1$. Since $P\in \langle W\rangle \cap \langle Z_1\cup S_1\rangle$, by the already quoted \cite{bb}, Lemma 1, we get
$h^1(\mathbb {P}^m,\mathcal {I}_{E\cup A_1\cup B_1}(d)) >0$. Hence there is a line $R\subset \mathbb {P}^m$ such that $\deg (R\cap (E\cup Z_1\cup S_1)) \ge d+2$.

First assume $R = L$. Hence $L\cap (A_1\cup B_1) = A_1$. Hence
$\deg (E\cap L) \ge d+2-t_1$. Set $E':= E\cap L$, $E'':= E\setminus E'$, $W':= \nu _d(E')$ and $W'':= \nu _d(E'')$. Since
$P\in \langle W'\cup W''\rangle$, there is $O\in \langle W'\rangle$ such that
$P\in \langle \{O\}\cup W''\rangle$. Hence $b_X(P) \le b_X(O) +\deg (W'')$. Since $O\in \langle D\rangle$, we have $r_X(O)\le r_D(O) \le \lfloor (d+2)/2\rfloor < d+2-t_1 \le \deg (W')$, contradicting the assumption
that $W$ computes $b_X(P)$.

Now assume $R\ne L$. Since the scheme
$L\cap R$ has degree $1$, while the scheme $A_1\cap L$ has degree $t_1$, we get $\deg (R\cap E) \ge d+2-s_1 > (d+2)/2$. As above we get a contradiction.

Now assume $b_X(P) = t_1+s_1$, but that $W\ne Z_1\cup S_1$ computes $b_X(P)$. As above we get a line $R$
such that $\deg (W\cup Z_1\cup S_1) \ge d+2$ and  this line $R$ must be $L$.
Since $P\in \langle Z_1\cup S_1\rangle$, there is $U\in \langle D\rangle$ such that $Z_1$ computes the border $D$-rank of $U$ and $P \in \langle U\cup S_1\rangle$. Take
$A\subset D$ computing $r_D(U)$. By \cite{cs} or \cite{lt}, Theorem 4.1, or \cite{bgi} we have $\sharp (A) = d+2-t_1$. Since $P\in \langle A\cup S_1\rangle$ and $A\cap S_1=\emptyset$, we
have $r_X(P) \le d+2+s_1-t_1$. Assume the existence of some $S\subset X$ computing $r_X(P)$ and such that $\sharp (S) \le
d+1+s_1-t_1$. Hence $\deg (S\cup S_1\cup Z_1) \le d+1+2s_1 \le 2d+1$.  Write $S = \nu _d(B)$. We proved that $Z_1\cup S_1$ computes $b_X(P)$. By \cite{bb}, Theorem 1,
we have $B = B_1\sqcup S_1$ with $B_1 = L\cap B$. Hence $\sharp (B_1) \le d+1-t_1$. Since $P\in \langle B_1\cup S_1\rangle$, there is $V\in \langle B_1\rangle$ such that
$P\in \langle V\cup S_1\rangle$. Hence $r_X(P) \le r_X(V) +s_1$. Since $B$ computes $r_X(P)$ and $V\in \langle B_1\rangle$, we get $r_X(V) = \sharp (B_1)$ and
that $B_1$ computes $r_X(V)$. Since $\nu _d(B_1) \subset D$, we have $V = Q$. Recall that $b_X(Q) = b_D(Q)$ and that $Z_1$ is the only subscheme of $X$ computing
$r_X(Q)$. We have $r_X(Q) =r_D(Q) = d+2-t_1$. Hence $\sharp (B_1)\ge d+2-t_1$, a contradiction.

If $2s_1<d$, then the same proof works even if $\sharp (B) =d+2+s_1-t_1$
and prove that any set computing $r_X(P)$ contains $S_1$.
\end{proof}

\begin{lemma}\label{c2==} 
Fix a hyperplane $M \subset \mathbb {P}^m$ and $0$-dimensional schemes $A, B$ such that $B$ is reduced, $A\ne B$,
$h^1(\mathcal {I}_A(d)) =h^1(\mathcal {I}_B(d))=0$ and $h^1(\mathbb {P}^m,\mathcal {I}_{\mbox{Res}_M(A\cup B)}(d-1)) =0$. Set
$Z:= \nu _d(A)$,  $S:= \nu _d(B)$. Then
$h^1(\mathbb{P}^m, \mathcal {I}_{A\cup B}(d))= h^1(M,\mathcal {I}_{(A\cup B)\cap M}(d))$ and $Z$ and $S$ are linearly
independent. Assume the
existence $P\in \langle Z\rangle \cap \langle S\rangle$ such that $P\notin \langle Z'\rangle$ for any $Z'\subsetneq Z$
and $P\notin \langle S'\rangle$ for any $S'\subsetneq S$. Set $F:= (B\setminus (B\cap M))\cap A$.
Then $B = (B\cap M)\sqcup F$ and $A = (A\cap M)\sqcup F$.
\end{lemma}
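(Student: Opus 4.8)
The plan is to establish the numerical assertions first and then to obtain the two decompositions by residuating the point $P$ with respect to $M$ and re-running the argument in degree $d-1$. Linear independence of $Z=\nu_d(A)$ and $S=\nu_d(B)$ is immediate, since $\dim\langle\nu_d(A)\rangle=\deg(A)-1-h^1(\mathcal{I}_A(d))=\deg(A)-1$, and likewise for $B$. For the equality of the two $h^1$'s I would feed $D=M$, $L=\mathcal{O}_{\mathbb{P}^m}(d)$ and $Z=A\cup B$ into the exact sequence (\ref{eqb1}) underlying Lemma \ref{z1.0}: since $\mbox{Res}_M(A\cup B)$ is $0$-dimensional one has $h^2(\mathbb{P}^m,\mathcal{I}_{\mbox{Res}_M(A\cup B)}(d-1))=h^2(\mathbb{P}^m,\mathcal{O}(d-1))=0$ (recall $m\ge 2$, $d\ge 3$), so together with the hypothesis $h^1(\mathcal{I}_{\mbox{Res}_M(A\cup B)}(d-1))=0$ the long exact cohomology sequence gives $h^1(\mathbb{P}^m,\mathcal{I}_{A\cup B}(d))=h^1(M,\mathcal{I}_{(A\cup B)\cap M,M}(d))$. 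Two facts will be used repeatedly in the structural part: first, if $U,V$ are $0$-dimensional with $h^1(\mathcal{I}_U(e))=h^1(\mathcal{I}_V(e))=h^1(\mathcal{I}_{U\cup V}(e))=0$, the Mayer--Vietoris sequence $0\to\mathcal{I}_{U\cup V}(e)\to\mathcal{I}_U(e)\oplus\mathcal{I}_V(e)\to\mathcal{I}_{U\cap V}(e)\to 0$ forces $\langle\nu_e(U)\rangle\cap\langle\nu_e(V)\rangle=\langle\nu_e(U\cap V)\rangle$ (cf. \cite{bb}, Lemma 1); second, $\mbox{Res}_M(A\cup B)=\mbox{Res}_M(A)\cup\mbox{Res}_M(B)$ (colon distributes over intersections of ideals), so by the hypothesis and monotonicity of $h^1$ of ideal sheaves of $0$-dimensional schemes one gets $h^1(\mathcal{I}_{\mbox{Res}_M(A)}(d-1))=h^1(\mathcal{I}_{\mbox{Res}_M(B)}(d-1))=h^1(\mathcal{I}_{\mbox{Res}_M(A)\cup\mbox{Res}_M(B)}(d-1))=0$.

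Next I would residuate $P$. If $P\in\langle\nu_d(M)\rangle$, then, using the Castelnuovo bound $h^1(\mathcal{I}_{A\cup M}(d))=0$ (apply Lemma \ref{z1.0} with $D=M$: the residual is $\mbox{Res}_M(A)$, the trace is all of $M$, and $h^1(\mathcal{O}_M(d))=0$), the first fact gives $P\in\langle\nu_d(A)\rangle\cap\langle\nu_d(M)\rangle=\langle\nu_d(A\cap M)\rangle$, whence $A\subseteq M$ by the minimality of $P$, and symmetrically $B\subseteq M$; then $\mbox{Res}_M(A)=\mbox{Res}_M(B)=\emptyset$, $F=\emptyset$, and both asserted decompositions hold trivially. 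So I may assume $P\notin\langle\nu_d(M)\rangle$. Fix a linear form $\ell$ with $M=\{\ell=0\}$ and let $\rho$ be the linear projection from $\mathbb{P}^N\setminus\langle\nu_d(M)\rangle$ onto $\mathbb{P}^{N'}$, the ambient space of $\nu_{d-1}(\mathbb{P}^m)$, dual to multiplication by $\ell$, $H^0(\mathcal{O}_{\mathbb{P}^m}(d-1))\to H^0(\mathcal{O}_{\mathbb{P}^m}(d))$. Because $\mathcal{I}_M\cdot(\mathcal{I}_Z:\mathcal{I}_M)\subseteq\mathcal{I}_Z$, for every $0$-dimensional $Z$ the map $\rho$ sends $\langle\nu_d(Z)\rangle\setminus\langle\nu_d(M)\rangle$ into $\langle\nu_{d-1}(\mbox{Res}_M(Z))\rangle$; applied to $A$ and $B$, the point $P':=\rho(P)$ is a well-defined point of $\langle\nu_{d-1}(\mbox{Res}_M(A))\rangle\cap\langle\nu_{d-1}(\mbox{Res}_M(B))\rangle$.

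The heart of the argument --- the step I expect to be the main obstacle --- is to show that $P'$ lies in no $\langle\nu_{d-1}(C)\rangle$ with $C\subsetneq\mbox{Res}_M(A)$, and symmetrically for $B$. Given such a $C$, I would look at $A':=V(\mathcal{I}_A+\mathcal{I}_M\mathcal{I}_C)$, a closed subscheme of $A$; it is proper since $\mathcal{I}_C\supsetneq\mathcal{I}_A:\mathcal{I}_M$ yields $g\in\mathcal{I}_C$ with $\ell g\notin\mathcal{I}_A$. The scheme $A\cup V(\mathcal{I}_M\mathcal{I}_C)$ has $M$-residual $\mbox{Res}_M(A)\cup C=\mbox{Res}_M(A)$ and trace all of $M$ on $M$, so Lemma \ref{z1.0} gives $h^1(\mathcal{I}_{A\cup V(\mathcal{I}_M\mathcal{I}_C)}(d))\le h^1(\mathcal{I}_{\mbox{Res}_M(A)}(d-1))+h^1(\mathcal{O}_M(d))=0$; hence, by Mayer--Vietoris as above, $H^0(\mathcal{I}_{A'}(d))=H^0(\mathcal{I}_A(d))+\ell\cdot H^0(\mathcal{I}_C(d-1))$. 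A lift $v$ of $P$ annihilates $H^0(\mathcal{I}_A(d))$ (because $P\in\langle\nu_d(A)\rangle$) and annihilates $\ell\cdot H^0(\mathcal{I}_C(d-1))$ (because $P'=\rho(P)\in\langle\nu_{d-1}(C)\rangle$), so it annihilates $H^0(\mathcal{I}_{A'}(d))$, i.e. $P\in\langle\nu_d(A')\rangle$ with $A'\subsetneq A$ --- contradicting the minimality of $P$. Thus $P'$ is minimal with respect to $\mbox{Res}_M(A)$, and, the same way, with respect to $\mbox{Res}_M(B)$.

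To finish, since $h^1(\mathcal{I}_{\mbox{Res}_M(A)\cup\mbox{Res}_M(B)}(d-1))=0$, the first fact in degree $d-1$ gives $\langle\nu_{d-1}(\mbox{Res}_M(A))\rangle\cap\langle\nu_{d-1}(\mbox{Res}_M(B))\rangle=\langle\nu_{d-1}(\mbox{Res}_M(A)\cap\mbox{Res}_M(B))\rangle$; as $P'$ lies here and is minimal for both schemes, $\mbox{Res}_M(A)=\mbox{Res}_M(A)\cap\mbox{Res}_M(B)=\mbox{Res}_M(B)=:R$. Because $B$ is reduced, $R=\mbox{Res}_M(B)$ equals the (reduced) set $B\setminus(B\cap M)$ and is disjoint from $M$, while $R=\mbox{Res}_M(A)\subseteq A$. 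Hence $F=(B\setminus(B\cap M))\cap A=R$, so $B=(B\cap M)\sqcup R=(B\cap M)\sqcup F$; and since $\mbox{Res}_M(A)=R$ is reduced and supported off $M$, every local component of $A$ at a point of $M$ is contained in $M$ while the components of $A$ off $M$ are precisely $R$, that is, $A=(A\cap M)\sqcup F$.
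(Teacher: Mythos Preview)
Your argument is correct, but it follows a genuinely different route from the paper's.  The paper never projects: after the preliminary $h^1$ computation it sets $B_1:=(B\cap M)\cup F$, observes that $M\cap(A\cup B_1)=M\cap(A\cup B)$ and $A\cap B_1=A\cap B$, and then uses Grassmann's formula twice (for $\langle Z\rangle\cap\langle S\rangle$ and for $\langle Z\rangle\cap\langle\nu_d(B_1)\rangle$) together with the residual sequence to conclude that these two intersections have the same dimension, hence coincide; thus $P\in\langle\nu_d(B_1)\rangle$ and $B=B_1$.  The symmetric statement for $A$ is handled by reference to \cite{bb}, Theorem~1.

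Your approach instead linearly projects $P$ from the span $\langle\nu_d(M)\rangle$, landing in the degree-$(d-1)$ Veronese ambient, and reduces the question to showing $\mbox{Res}_M(A)=\mbox{Res}_M(B)$ via minimality of the projected point $P'$.  The key extra ingredient you need is the lemma that $P'\in\langle\nu_{d-1}(C)\rangle$ with $C\subsetneq\mbox{Res}_M(A)$ forces $P\in\langle\nu_d(A')\rangle$ for a proper $A'\subsetneq A$; your construction of $A'=V(\mathcal{I}_A+\ell\,\mathcal{I}_C)$ and the Mayer--Vietoris computation of $H^0(\mathcal{I}_{A'}(d))$ are correct.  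The paper's proof is shorter and stays at a single degree, using only Grassmann and one residual sequence; yours is more geometric, makes the role of residuation explicit, and yields the slightly stronger intermediate conclusion $\mbox{Res}_M(A)=\mbox{Res}_M(B)$ directly, from which the two decompositions follow by the local analysis you give at the end.
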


\begin{proof}
Since  $h^1(\mathcal {I}_A(d)) =h^1(\mathcal {I}_B(d))=0$, both $Z$ and $S$ are linearly independent.
Since $h^2(\mathcal {I}_{A\cup B}(d-1)) =0$, the residual sequence
$$0 \to \mathcal {I}_{\mbox{Res}_M(A\cup B)}(d-1) \to \mathcal {I}_{A\cup B}(d) \to \mathcal {I}_{(A\cup B)\cap M}(d) \to 0.$$
gives $h^1(\mathbb{P}^m, \mathcal {I}_{A\cup B}(d))= h^1(M,\mathcal {I}_{(A\cup B)\cap M}(d))$. Assume
the existence of $P$ as in the statement. Set $B_1:= (B\cap M)\cup F$.

\quad (a) Here we prove that $B = (B\cap M)\cup F$, i.e. $B =B_1$. Since $P\notin \langle S'\rangle$ for any $S'\subsetneq S$,
it is sufficient to prove $P\in \langle \nu _d(B_1)\rangle$. Since $Z$ and $S$ are linearly independent,
Grassmann's formula gives $\dim (\langle Z\rangle \cap\langle S\rangle )  = \deg (Z\cap S)-1 + h^1(\mathbb{P}^m, \mathcal {I}_{A\cup B}(d))$. Since
$\mbox{Res}_M(A\cup B_1) \subseteq \mbox{Res}_M(A\cup B)$ and $h^1(\mathbb {P}^m,\mathcal {I}_{\mbox{Res}_M(A\cup B)}(d-1)) =0$, we have $h^1(\mathbb{P}^m, \mathcal {I}_{A\cup B_1}(d))= h^1(M,\mathcal {I}_{(A\cup B_1)\cap M}(d))$. Since $M\cap (A\cup B_1) = M\cap (A\cup B)$, we get
 $h^1(\mathbb{P}^m, \mathcal {I}_{A\cup B_1}(d))=h^1(\mathbb{P}^m, \mathcal {I}_{A\cup B}(d))$. Since both schemes $Z$ and $\nu _d(B)$ are
linearly independent, Grassmann's formula gives $\dim (\langle Z\rangle \cap\langle \nu _d(B)\rangle )  = \deg (A\cap B)-1 + h^1(\mathbb{P}^m, \mathcal {I}_{A\cup B}(d))$. Since both schemes $Z$ and $\nu _d(B_1)$ are
linearly independent, Grassmann's formula gives $\dim (\langle Z\rangle \cap\langle \nu _d(B_1)\rangle )  = \deg (A\cap B_1)-1 + h^1(\mathbb{P}^m, \mathcal {I}_{A\cup B}(d))$.
Since $A\cap B_1 = A\cap B$, we get  $\dim (\langle Z\rangle \cap\langle S\rangle ) = \dim (\langle Z\rangle \cap \langle \nu _d(B_1)\rangle $. Since
$\langle Z\rangle \cap \langle \nu _d(B_1)\rangle \subseteq \langle Z\rangle \cap \langle S\rangle$, we get $\langle Z\rangle \cap \langle \nu _d(B_1)\rangle = \langle Z\rangle \cap \langle S \rangle$.
Hence $P\in \langle \nu _d(B_1)\rangle$.

\quad (b) In a very similar way we get $A = (A\cap M)\sqcup F$ (see steps (b), (c) and (d) of the proof of Theorem 1
in \cite{bb}).\end{proof}

\begin{theorem}\label{f2}
Assume $m \ge 3$. Fix integers $d \ge 5$ and $3 \le t \le d$. Fix 
a degree $2$ connected subscheme $A_1\subset L$ and a reduced set $A_2\subset \mathbb {P}^m\setminus L$, such
that $\sharp (A_2) =t-2$ and $h^1(\mathbb {P}^m,\mathcal {I}_A(d))=0$, for $A:= A_1\cup A_2$. Set $Z_i:= \nu _d(A_i)$, $i=1,2$, and
$Z:= Z_1\cup Z_2$. Assume that $A$ is in linearly general position in $\mathbb {P}^m$. Fix $P\in \langle Z\rangle$ such that $P\notin \langle Z'\rangle$ for any $Z'\subsetneq Z$. Then $b_X(P)=t$
and $r_X(P) = d+t-2$.
\end{theorem}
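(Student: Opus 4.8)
The plan is to prove the two equalities $b_X(P)=t$ and $r_X(P)=d+t-2$ in turn, the lower bound for the rank being the substantial part.

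\textbf{Border rank.} First I would record that the linearly general position hypothesis on $A$ forces $\deg(R\cap A)\le 2$ for \emph{every} line $R\subset\mathbb{P}^m$. Indeed $L\cap A=A_1$ has degree $2$; and a line $R\ne L$ meets $A_1$ in length $\le 1$ and, by linear general position (valid since $m\ge 3\ge 2$), cannot contain three of the $t-1$ distinct points among $(A_1)_{red}$ and $A_2$, so $\deg(R\cap A)=\deg(R\cap A_1)+\deg(R\cap A_2)\le 1+1=2$. Hence $A$ satisfies the hypotheses of Proposition \ref{f1} (recall $2\le t\le d$, $m\ge 2$), which yields $b_X(P)=t$ and, crucially, that $Z=\nu_d(A)$ is the \emph{unique} $0$-dimensional scheme of degree $\le t$ with $P\in\langle Z\rangle$; in particular $P\notin\sigma_{t-1}(X)$.

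\textbf{Upper bound.} Since $h^1(\mathcal{I}_A(d))=0$, $Z=Z_1\sqcup Z_2$ is linearly independent and $\langle Z\rangle=\langle Z_1\rangle\oplus\langle Z_2\rangle$, so there is a unique $Q\in\langle Z_1\rangle$ with $P\in\langle\{Q\}\cup Z_2\rangle$. If $Q=\nu_d((A_1)_{red})$ then $P$ would lie on the span of the proper subscheme $\nu_d((A_1)_{red}\cup A_2)$ of $Z$, against the hypothesis on $P$; hence $Q$ lies on the tangent line of $D:=\nu_d(L)$ at $\nu_d((A_1)_{red})$ but $Q\notin D$, i.e.\ $Q$ is the class of a binary form $\ell^{d-1}m$ with $\ell,m$ independent. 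Then $r_D(Q)=d+2-2=d$ (Sylvester; \cite{cs}, \cite{lt}, Theorem 4.1, \cite{bgi}, \S 3), and $r_X(Q)=r_D(Q)=d$ because rank is computed inside a rational normal curve (\cite{ls}, Proposition 3.1, \cite{lt}, \S 3.2). So $r_X(P)\le r_X(Q)+\sharp(Z_2)=d+t-2$.

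\textbf{Lower bound.} Suppose $S=\nu_d(B)$, $B$ reduced with $\sharp(B)=r\le d+t-3$, computes $r_X(P)$, so $P\notin\langle\nu_d(B')\rangle$ for $B'\subsetneq B$. I would first make three reductions. (i) $h^1(\mathcal{I}_B(d))=0$: otherwise, as $r\le 2d-3$, \cite{bgi}, Lemma 34 gives a line $R$ with $\sharp(B\cap R)\ge d+2$, and replacing $\nu_d(B\cap R)$ by a point of $\langle\nu_d(R)\rangle$ (of $X$-rank $\le d$) forces $r_X(P)\le d+(r-d-2)<r$, contradicting minimality. (ii) $\sharp(B\cap R)\le d-1$ for every line $R$: the $\langle\nu_d(B\cap R)\rangle$-component $O$ of $P$ lies in $\langle\nu_d(R)\rangle$ and is computed minimally by $B\cap R$, whence $\sharp(B\cap R)=r_{\nu_d(R)}(O)\le d$; if this equals $d$ then $O$ is a maximal-rank binary form $\ell^{d-1}m$ whose unique border-rank scheme is a tangent vector $\nu_d(W_0)$ of $\nu_d(R)$, and then $P\in\langle\nu_d(W_0\cup(B\setminus R))\rangle$ with $\deg(W_0\cup(B\setminus R))=2+(r-d)\le t-1\le\beta(X)$, so $P\in\sigma_{t-1}(X)$ by Lemma \ref{a1}, contradicting $b_X(P)=t$. (iii) $h^1(\mathcal{I}_{A\cup B}(d))>0$: otherwise the Grassmann formula gives $\langle Z\rangle\cap\langle S\rangle=\langle\nu_d(A\cap B)\rangle$ with $A\cap B\subsetneq A$ (as $A$ is non-reduced and $B$ is not), again contradicting that $P$ avoids proper subspans of $Z$. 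Combining these: by the border rank step $A$ meets every line in degree $\le 2$, so by (ii) every line meets $A\cup B$ in degree $\le d+1$; hence whenever $\deg(A\cup B)\le 2d+1$, \cite{bgi}, Lemma 34 applied to $A\cup B$ contradicts (iii) — this already settles the range $t\le (d+4)/2$. For larger $t$ I would instead invoke Lemma \ref{c2==} with $M$ a general hyperplane through $L$: there $A\cap M=A_1$ and $\mathrm{Res}_M(A\cup B)=A_2\cup(B\setminus L)$, so the lemma's conclusion $A=(A\cap M)\sqcup F$ forces $F=A_2$, whence $A_2\subseteq B$ and $B=(B\cap L)\sqcup A_2$. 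Setting $B_0:=B\cap L$, the $\langle\nu_d(B_0)\rangle$-component of $P$ in the splitting $\langle S\rangle=\langle\nu_d(B_0)\rangle\oplus\langle\nu_d(A_2)\rangle$ must equal the point $Q$ above (since $Q\in\langle D\rangle\supseteq\langle\nu_d(B_0)\rangle$, $\langle\nu_d(A_2)\rangle\cap\langle D\rangle=\emptyset$, and both decompositions are unique); so $Q\in\langle\nu_d(B_0)\rangle$ with $B_0\subset L$, giving $\sharp(B_0)\ge r_X(Q)=r_D(Q)=d$ and $r=\sharp(B_0)+(t-2)\ge d+t-2$, a contradiction. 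This completes the proof.

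\textbf{Main obstacle.} The delicate point is verifying $h^1(\mathbb{P}^m,\mathcal{I}_{\mathrm{Res}_M(A\cup B)}(d-1))=0$ so as to be allowed to apply Lemma \ref{c2==}: the reduced scheme $A_2\cup(B\setminus L)$ can have degree close to $3d$, which lies outside the range $\le 2d-1$ in which the Castelnuovo line lemma in degree $d-1$ is available, even though one knows it has at most $2$ points of $A_2$ and at most $d-1$ points of $B$ on each line. I expect this to be handled by an induction on $t$ (the base range $t\le (d+4)/2$ being exactly the case treated above without Lemma \ref{c2==}), using the statement for $t-1$ on a point obtained from $P$ by peeling off a point of $A_2\cap B$ in order to keep the relevant degrees within the usable range — or alternatively by a sharper Castelnuovo-type estimate exploiting that $\mathrm{Res}_M(A\cup B)$ is disjoint from $L$.
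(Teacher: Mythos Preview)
Your border-rank step, the upper bound, and the reductions (i)--(iii) are all correct and cleanly argued; together with \cite{bgi}, Lemma 34, they give a complete proof in the range $2t\le d+4$, where $\deg(A\cup B)\le 2d+1$. This short route is not the one the paper takes, but it is valid in that range.

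The genuine gap is exactly the one you flag. For $t>(d+4)/2$ your plan is to apply Lemma~\ref{c2==} with a general hyperplane $M\supset L$, but the hypothesis $h^1(\mathcal I_{\mathrm{Res}_M(A\cup B)}(d-1))=0$ is not available: the residual $A_2\cup(B\setminus L)$ may have degree up to $2t+d-5\le 3d-5$, well beyond the $2(d-1)+1$ range in which the line lemma controls $h^1$ in degree $d-1$, and nothing else in your setup bounds it. Your proposed induction does not close the gap either: to ``peel off'' a point you would need $A_2\cap B\ne\emptyset$, which is precisely the conclusion $A_2\subseteq B$ you are trying to derive from Lemma~\ref{c2==}; and even granting a common point $q$, the projections of $P$ from $\nu_d(q)$ into $\langle\nu_d(A\setminus\{q\})\rangle$ and into $\langle\nu_d(B\setminus\{q\})\rangle$ need not coincide, so the inductive statement for $t-1$ does not feed back usefully.

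The paper avoids this obstruction by a different mechanism: instead of a single hyperplane through $L$, it runs an iterated Castelnuovo/Horace argument with a greedy sequence of hyperplanes $M_1,M_2,\dots$, each chosen to maximise $x_i:=\deg(M_i\cap E_i)$ where $E_{i+1}=\mathrm{Res}_{M_i}(E_i)$. One then lets $u$ be the first index with $h^1(M_u,\mathcal I_{M_u\cap E_u}(d+1-u))>0$ and shows $u\in\{1,2,3,d\}$ by a counting argument using $x_i\ge x_{i+1}$ and $\sum x_i\le r_X(P)+t\le 3d-3$. Each value of $u$ is then ruled out by locating a line $R$ with large intersection and contradicting either the linear general position of $A$ or the assumed bound on $\sharp(B)$. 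Lemma~\ref{c2==} is invoked only in the sub-case $u=1$ \emph{and} $h^1(\mathcal I_{E_2}(d-1))=0$, where its residual hypothesis holds by definition; when that $h^1$ is positive the argument continues with $M_2$ instead. This fallback is precisely what your single-hyperplane approach lacks.
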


\begin{proof}
Since $h^1(\mathbb {P}^m,\mathcal {I}_A(d))=0$, then the scheme $Z$ is linearly independent. Proposition \ref{f1} gives $b_X(P)=t$. Fix a set $B \subset \mathbb {P}^m$
such that $S:= \nu _d(B)$ computes $r_X(P)$. Assume $r_X(P)<d+t-2$, i.e. $\sharp (S) \le d+t-3$. Since $t \le d$, we have $r_X(P) + t \le 3d-3$.

\quad (a) Until step (g) we assume $m=3$.
We have $h^1(\mathbb {P}^m,\mathcal {I}_{A\cup B}(d)) >0$ (\cite{bb}, Lemma 1). Hence $A\cup B$ is not in linearly general
position (see \cite{eh}, Theorem 3.2). Hence there is a plane $M\subset \mathbb {P}^3$ such
that $\deg (M\cap (A\cup B))\ge 4$. Among all such planes we take one, say $M_1$, such that the integer $x_1:= \deg (M_1\cap (A\cup B))$ is maximal. Set
$E_1:= A\cup B$ and $E_2:= \mbox{Res}_{M_1}(E_1)$. Notice that $\deg (E_2)=\deg (E_1)-x_1$. Define
inductively the planes $M_i \subset \mathbb {P}^3$, $i \ge 2$, the schemes $E_{i+1}$, $i\ge 2$, and the integers $x_i$,
$i\ge 2$, by the condition that
$M_i$ is one of the planes such that the integer $x_i:= \deg (M_i\cap E_i)$ is maximal and then set $E_{i+1}:= \mbox{Res}_{M_i}(E_i)$. We
have
$E_{i+1}\subseteq E_i$ (with strict inclusion if $E_i \ne \emptyset$) for all $i\ge 1$ and $E_i=\emptyset$ for all $i \gg 0$.
For all integers
$t$ and
$i
\ge 1$ there is the residual exact sequence
\begin{equation}\label{eqd1}
0 \to \mathcal {I}_{E_{i+1}}(t-1) \to \mathcal {I}_{E_i}(t) \to \mathcal {I}_{E_i\cap M_i,M_i}(t) \to 0.
\end{equation}
Let $u$ be the minimal positive integer $i$ such that and $h^1(M_i,\mathcal {I}_{M_i\cap E_i}(d+1-i))>0$.
Use at most $r_X(P) +t$ times the exact sequences (\ref{eqd1}) to prove the existence of such an integer $u$. Any degree $3$
subscheme of $\mathbb {P}^3$ is contained in a plane. Hence for any $i\ge 1$ either $x_i \ge 3$ or $x_{i+1} = 0$.
Hence $x_i \ge 3$ for all $i \le u-1$. Since $r_X(P) + t \le 3d$, we get $u \le d$.

\quad (b) Here we assume $u=1$. Since
$A$ is in linearly general position, we have
$\deg (M_1\cap A)
\le 3$. First assume $x_1\ge 2d+2$. Hence $\sharp (B) \ge \sharp (B\cap M_1)\ge 2d-1>d+t-3$, a contradiction. Hence $x_1\le 2d+1$.
Since $h^1(M_1,\mathcal {I}_{M_1\cap E_1}(d)) > 0$, there is a line $T\subset M_1$ such that $\deg (T\cap E_1)\ge d+2$  (\cite{bgi}, Lemma 34).
Since $A$ is in linearly general position, we have $\deg (A\cap T) \le 2$. Hence $\deg (T\cap B)\ge d$. Assume for the moment $h^1(\mathbb {P}^3,\mathcal
{I}_{E_2}(d-1)) >0$. Hence
$x_2
\ge d+1$. Since by hypothesis $d \ge 4$, $x_2\le x_1$ and $x_1+x_2 \le 3d+1$, we have $x_2\le 2d-1$. Hence \cite{bgi}, Lemma 34,
applied to the integer
$d-1$ gives the existence of a line $R\subset \mathbb {P}^3$ such that $\deg (E_2\cap R)\ge d+1$. Since
$A$ is in linearly general position, we also get $\deg (R\cap E_2) \le 2$
and hence $\deg (R\cap B\cap E_2) \ge d-1$. Hence $\sharp (S) \ge 2d-1$, a contradiction. Now assume $h^1(\mathbb {P}^3,\mathcal
{I}_{E_2}(d-1)) =0$. Lemma \ref{c2==} gives the existence of a set $F\subset \mathbb {P}^3\setminus M_1$ such that
$A = (A\cap M_1)\sqcup F$ and $B = (B\cap M_1)\sqcup F$. Hence $\sharp (F) = \deg (A) -\deg (A\cap M_1) \ge t-1$. Since
$\sharp (B\cap M_1) \ge d$, we obtained a contradiction.

\quad (c) Here and in steps (d), (e), and (f) we assume $m=3$ and $u\ge 2$. 
We first look at the possibilities for the integer $u$. Since every degree $3$ closed subscheme of $\mathbb {P}^3$ is contained in a plane, either $x_i \ge 3$ or $x_{i+1}=0$. Since $r_X(P) +t \le 3d-3$, we get
$x_i=0$ for all $i>d$. Hence $u \le d$. We have
$x_u \ge d+3-u$ (e.g. by \cite{bgi}, Lemma 34). Since the sequence $x_i$, $i\ge 1$, is non-increasing, we get
$r_X(P) +2+t-2 \le u(d+3-u)$. Since the function
$s\mapsto s(d+3-s)$ is concave in the interval $[2,d+1]$, we get $u\in \{2,3,d\}$. 

\quad (d) Here we assume $u=2$. Since $3d+1 \ge x_1+x_2 \ge 2x_2$, we get
$x_2 \le 2(d-1)+1$. Hence there is a line $R\subset \mathbb {P}^3$ such that $\deg (E_2\cap R) \ge d+1$. We claim that $x_1 \ge d+1$. Indeed, since $A\cup B \nsubseteq R$, there
is a plane $M\subset R$ such that $\deg (M\cap (A\cup B)) > \deg ((A\cup B)\cap R) \ge d+1$. The maximality property of $x_1$ gives $x_1 \ge d+2$.
Since $A$ is in linearly general position, we have $\deg (A\cap R) \le 2$ and $\deg (A\cap M_1) \le 3$. Hence $\deg (B\cap E_2\cap R) \ge d-1$
and $r_X(P) \ge (x_1-3)+d-1 \ge 2d-2 \ge d+t-2$, a contradiction.

\quad (e) Here we assume $u=3$. Since $h^1(M_3,\mathcal {I}_{M_3\cap E_3}(d-2))>0$, there is a line $R\subset M_3$ such that $\deg (E_3\cap T) \ge d$. This is absurd, because $x_1\ge x_2\ge x_3 \ge d$
and $x_1+x_2+x_3 \le r_X(P) + t \le d+2t -3 \le 3d-3$.

\quad (f) Here we assume $u=d$. The condition
``~$h^1(\mathcal {I}_{M_d\cap E_d}(1)) >0$~'' says that either $M_d\cap E_d$ contains a scheme of length $\ge 3$ contained in a line
$R$ or $x_d\ge 4$. Since $x_d \ge 3$, we have $r_X(P) +t \ge x_1+\cdots +x_d \ge 3d$. Since $t\le d$ and $r_X(P) \le d+t-3$, this is absurd.

\quad (g) Here we assume $m>3$. We make a similar proof, taking as $M_i$, $i\ge 1$, hyperplanes of $\mathbb {P}^m$. Any $0$-dimensional scheme of degree at most $m$
of $\mathbb {P}^m$ is contained in hyperplane. Hence either
$x_i \ge m$ or $x_{i+1}=0$. With these modification we repeat the proof of the case $m=3$.
\end{proof}

The following example is the transposition of \cite{bb1}, Example 2, to our set-up.

\begin{example}\label{dd1}
Fix a smooth plane conic $C\subset \mathbb {P}^m$, $m\ge 2$,
and positive integers $d\ge 5$, $x, y$, $a_i$, $1\le i \le x$, and $b_j$, $1\le j \le y$, such
that $\sum _{i=1}^{x} a_i + \sum _{j=1}^{y} b_j =2d+2$. Fix $x+y$ distinct points $P_1, \dots ,P_x, Q_1, \dots ,Q_y$
of $C$. Let $A\subset C$ be the effective degree $\sum _{i=1}^{x} a_i$ divisor of $C$ in which each $P_i$ appear with
multiplicity $a_i$. Let $B\subset C$ be the effective degree $\sum _{j=1}^{j} b_j$ divisor of $C$ in which each $Q_j$ appear
with multiplicity $b_j$. Since $C$ is projectively normal, $h^0(C,\mathcal {O}_C(d)) = 2d+1$
and $h^1(C,\mathcal {I}_E(d))=0$ for every divisor $E$ of $C$ with degree at most $2d+1$, the set $\langle \nu _d(A)\rangle
\cap \langle \nu _d(B)\rangle$ is a unique point, $P$, $P\notin \langle \nu _d(A')\rangle$
for any $A'\subsetneq A$ and $P\notin \langle \nu _d(B')\rangle$ for any $B'\subsetneq B$. Since $h^1(C,\mathcal {I}_E(d))=0$ for every divisor $E$ of $C$ with degree at most $2d+1$,
it is easy to check that $b_X(P) = \min \{\deg (A),\deg (B)\}$. Thus $P$ is contained in two different quasi-strata
of $\sigma _t(X_{m,d})^{\dagger}$ for $t\ge \max \{\deg (A), \deg (B)\}$. If $\deg (A) = \deg (B) =d+1$, then $P\in \sigma _{d+1}(X_{m,d})^{\dagger}\setminus \sigma _d(X_{m,d})$
and both $A$ and $B$ compute the border rank of $P$.
\end{example}

\providecommand{\bysame}{\leavevmode\hbox to3em{\hrulefill}\thinspace}

\end{document}